\documentclass[11pt]{amsart}
\newtheorem{theorem}{Theorem}[section]
\newtheorem{lemma}[theorem]{Lemma}
\newtheorem{proposition}[theorem]{Proposition}
\theoremstyle{definition}
\newtheorem{definition}[theorem]{Definition}

\newtheorem{corollary}[theorem]{Corollary}
\theoremstyle{remark}
\newtheorem{remark}[theorem]{Remark}

\begin{document}
\title{Laplacian Flow for Closed $G_2$-Structures: Short Time Behavior}
\author{Robert Bryant \& Feng Xu}
\address{MSRI, 17 Gauss Way, Berkeley, CA 94720 \& MSI, The Australian National University, ACT 0200, Australia}
\email{bryant@msri.org, feng.xu@anu.edu.au}
\date{}
\subjclass{Primary 53C38}
\keywords{$G_2$ structures, Geometric flow, Parabolic equation}
\begin{abstract}
We prove short time existence and uniqueness of solutions to the Laplacian flow for closed $G_2$ structures on a compact manifold $M^7$. The result was claimed in \cite{BryantG2}, but its proof has never appeared. 
\end{abstract}
\maketitle
\section*{Introduction}
Since R. Hamilton introduced Ricci flow to study Riemannian structures on manifolds \cite{Hamilton}, extensive work has been done to extend the ideas and techniques to other geometric structures. For example, in the category of K\"ahler structures, K\"ahler Ricci flow was studied in \cite{Cao}. In the symplectic background, a flow called anti-complexified Ricci flow stimulates much interest and its s hort time existence has been proved in \cite{LW}. 

 Many attempts have also been tried on $G_2$ structures (e.g., see \cite{Ka},\cite{KY}). In this article, we are interested in the Laplacian flow
\begin{eqnarray}\label{LapFlow}
 \frac{d}{dt}\sigma=\Delta_\sigma\sigma,
\end{eqnarray}
where $\sigma$ is the defining 3-form of a $G_2$-structure, and $\Delta_\sigma$ is the Hodge Laplacian of the metric determined by $\sigma$. This flow was studied by Steven Altschuler and the first author between 1992 and 1994 (see \cite{BryantG2}). In particular, when the initial 3-form $\sigma_0$ is closed and we evolve inside a fixed cohomology class, a natural question is under which conditions it will converge to a structure with holonomy in $G_2$. Later on, when $M$ is compact, it is found to be the gradient flow of Hitchin's volume functional $V:[\sigma_0]\rightarrow \mathbf{R} $ defined by
\begin{eqnarray}\label{Functional}
 V(\sigma)=\int_M\sigma\wedge *_\sigma\sigma.
\end{eqnarray}

However, since both the flow (\ref{LapFlow}) and the functional (\ref{Functional}) are diffeomorphism invariant and since $[\sigma_0]/{\rm Diff}_0$ is still infinite dimensional, it is not clear if a short time solution to 
\begin{eqnarray}\label{ClsdLapFlow}
\left\{\begin{array}{l}
 \frac{d}{dt}\sigma=\Delta_\sigma\sigma\\
  d\sigma=0\\
  \sigma(0)=\sigma_0
\end{array}\right.
\end{eqnarray}
exists at all. We prove this result in this paper.
\begin{theorem}\label{Thm}
 Assume $M$ is compact. Then the initial value problem (\ref{ClsdLapFlow}) has a unique solution for a short time $0\leq t\leq \epsilon$ with $\epsilon$ depending on $\sigma_0$. 
\end{theorem}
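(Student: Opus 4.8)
The essential difficulty, already flagged above, is that the flow (\ref{ClsdLapFlow}) is invariant under the full diffeomorphism group, so the operator $\sigma\mapsto\Delta_\sigma\sigma$ cannot be parabolic: its linearization must degenerate along the orbit directions. My plan is therefore to follow the strategy that DeTurck introduced for the Ricci flow, adapted to the $G_2$ setting. Because $d\sigma=0$ forces $\Delta_\sigma\sigma=dd^*_\sigma\sigma$, the right-hand side is exact; hence the cohomology class and the closedness condition are automatically preserved, and I may treat (\ref{ClsdLapFlow}) as an essentially unconstrained evolution of $\sigma$ that nonetheless keeps $d\sigma=0$.

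First I would compute the linearization $\eta\mapsto D_\sigma(\Delta_\sigma\sigma)(\eta)$ and extract its principal symbol $\widehat L(\xi)\in\mathrm{End}(\Lambda^3T^*M)$ for $\xi\neq0$. This symbol has two contributions: the term $\Delta_\sigma\eta$, whose symbol is $|\xi|^2$, and the terms coming from the nonlinear algebraic dependence of the metric (and hence of $*_\sigma$ and $d^*_\sigma$) on $\sigma$. To organize the computation I would decompose everything under the pointwise action of $G_2$, using $\Lambda^3T^*M=\Lambda^3_1\oplus\Lambda^3_7\oplus\Lambda^3_{27}$ and $S^2T^*M=S^2_1\oplus S^2_{27}$. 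The key algebraic fact, forced by Schur's lemma, is that the derivative of the metric map $\sigma\mapsto g_\sigma$ annihilates $\Lambda^3_7$ and restricts to isomorphisms on $\Lambda^3_1$ and $\Lambda^3_{27}$; feeding this into the symbol should show that $\widehat L(\xi)$ is a non-negative symmetric endomorphism whose kernel, for each $\xi\neq0$, is exactly the seven-dimensional space $\{\xi\wedge(\iota_X\sigma):X\in T_pM\}$ --- precisely the symbol of the infinitesimal-diffeomorphism map $X\mapsto\mathcal{L}_X\sigma=d(\iota_X\sigma)$. I expect this symbol computation, and the verification that the degeneracy is \emph{exactly} the gauge while $\widehat L$ is strictly positive transverse to it, to be the main obstacle; it is here that the special nature of the Laplacian flow, as opposed to other $G_2$ flows that have been tried, is decisive.

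With the symbol understood, I would introduce a DeTurck-type vector field $V=V(\sigma)$, built from the difference of the Levi-Civita connections of $g_\sigma$ and a fixed background metric, so that $V$ is a genuine vector field depending on the $1$-jet of $\sigma$, and consider the gauge-fixed flow $\partial_t\sigma=\Delta_\sigma\sigma+\mathcal{L}_{V(\sigma)}\sigma$. Since $\mathcal{L}_{V}\sigma=d(\iota_V\sigma)$ is again exact, closedness is still preserved. The point is that the symbol of $\eta\mapsto D_\sigma(\mathcal{L}_{V(\sigma)}\sigma)(\eta)$ should supply exactly the missing $|\xi|^2$ along the gauge directions, so that the modified operator becomes strictly parabolic in the sense of Petrowsky. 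Standard theory for quasilinear parabolic systems on the compact manifold $M$ then yields a unique short-time solution $\tilde\sigma(t)$ of the modified flow with $\tilde\sigma(0)=\sigma_0$.

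Finally I would undo the gauge fixing. Let $\phi_t$ be the one-parameter family of diffeomorphisms generated by $-V(\tilde\sigma(t))$ with $\phi_0=\mathrm{id}$, obtained by solving an ODE for $t$ small. By naturality of the Hodge Laplacian, $\sigma(t):=\phi_t^*\tilde\sigma(t)$ solves the original flow $\partial_t\sigma=\Delta_\sigma\sigma$, remains closed since pullback commutes with $d$, and satisfies $\sigma(0)=\sigma_0$; this proves existence. For uniqueness I would argue in the reverse direction: given any solution $\sigma(t)$ of (\ref{ClsdLapFlow}), solve the associated harmonic-map-type heat flow to produce diffeomorphisms $\psi_t$ for which $\psi_t^*\sigma(t)$ solves the modified parabolic flow with data $\sigma_0$. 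Uniqueness for the strictly parabolic modified flow, together with uniqueness for the ODE generating $\psi_t$, then forces any two solutions of (\ref{ClsdLapFlow}) to coincide.
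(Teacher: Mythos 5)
Your overall architecture (DeTurck-type gauge fixing by a vector field built from the difference of connections, pulling back by the flow of $-V$, reversing the gauge for uniqueness) is the same as the paper's, but the central analytic claim on which everything rests is wrong, and it is precisely the point where this problem is harder than the Ricci flow. You assert that the principal symbol of the linearization of $\sigma\mapsto\Delta_\sigma\sigma$ is a non-negative symmetric endomorphism of $\Lambda^3$ whose kernel is exactly the $7$-dimensional gauge space $\{\xi\wedge(X\lrcorner\sigma)\}$, so that adding $\mathcal{L}_{V(\sigma)}\sigma$ yields a strictly parabolic quasilinear system on all of $\Lambda^3$ to which standard theory applies. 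Writing the variation as $\psi=3f^0\sigma+*_\sigma(f^1\wedge\sigma)+f^3$, the leading term of the linearization is
\[
(*d*d-d*d*)f^3+(3*d*d+4d*d*)(f^0\sigma)+(*d*d+d*d*)\bigl(*(f^1\wedge\sigma)\bigr)+{\rm l.o.t.},
\]
and since the symbols of $*d*d$ and $-d*d*$ on $3$-forms are $|\xi|^2$ times two \emph{complementary} orthogonal projections, the sign flips in the $f^0$- and $f^1$-blocks make those blocks indefinite, not merely degenerate along a $7$-dimensional gauge orbit. Moreover the symbol of any gauge term $d(V_*(\psi)\lrcorner\sigma)$ takes values only in $\xi\wedge\Lambda^2$, so it cannot repair the components of the symbol transverse to that $15$-dimensional subspace. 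This is why the paper only proves the much weaker statement (Lemma \ref{Parabolicity}) that $P(\psi)+Q_{-5,-1}(\psi)=-\Delta_\sigma\psi+d\Phi(\psi)$ \emph{under the hypothesis} $d\psi=0$.

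Consequently your step ``standard theory for quasilinear parabolic systems on $M$ then yields a unique short-time solution'' has no theorem behind it: the gauge-fixed flow is parabolic only along the affine space $\sigma_0+dC^\infty([0,T]\times M,\Lambda^2(M))$ of exact perturbations, and exactness is a differential constraint rather than a vector-bundle condition, so the off-the-shelf quasilinear parabolic existence results do not apply. This is exactly the difficulty the paper flags (``no existing theory of parabolic equations seems to be directly applicable''), and it is resolved there by proving Lemma \ref{Lemma} via the Nash--Moser inverse function theorem on the tame Fr\'echet space $dC^\infty([0,T]\times M,\Lambda^2(M))$, solving the linearized equation by passing to a potential $\beta$ with $\psi=d\beta$ for which one genuinely has a parabolic system. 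Your proposal is missing this entire layer, which is the actual content of the theorem. A secondary, non-fatal difference: for uniqueness the paper deliberately avoids the harmonic-map heat flow you invoke and instead solves the strictly parabolic equation $\frac{d}{dt}\phi=-V((\phi^{-1})^*\sigma)$ for the gauge diffeomorphisms directly; either device is workable once the existence/uniqueness machinery for the gauge-fixed flow is in place, but yours again presupposes parabolic theory that has not been established.
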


We give an outline of its proof. First, when restricted to closed forms, the flow equation takes the form
 \begin{eqnarray}
  \left\{\begin{array}{l}
   \frac{d}{dt}\sigma=-d*_\sigma d*_\sigma\sigma\\\\
   \sigma(0)=\sigma_0    
  \end{array}\right.
 \end{eqnarray}
where $d\sigma_0=0.$

 It follows that a solution $\sigma(t)$ must lie in the cohomology class $[\sigma_0]$. By letting $\sigma(t)=\sigma_0+\theta(t)$ with $\theta$ taking values in exact forms, we may rewrite the flow equation in terms of $\theta$
\begin{eqnarray}\label{Flowtheta}
 \left\{\begin{array}{l}
          \frac{d}{dt}\theta=-d*_{(\theta+\sigma_0)}d*_{(\theta+\sigma_0)}(\theta+\sigma_0),\\\\
          \theta(0)=0.
        \end{array}\right.
\end{eqnarray}
Clearly, (\ref{ClsdLapFlow}) is equivalent to the initial value problem (\ref{Flowtheta}) for a family of exact forms $\theta(t)$.

Of course, this flow is still diffeomorphism-invariant and thus is not parabolic. As in Deturck's Trick for the Ricci Flow \cite{De}, we modify the flow by an operator of the form $\mathcal{L}_{V(\sigma)}(\sigma)=d(V\lrcorner \sigma)+V\lrcorner d\sigma=d(V\lrcorner \sigma)$:
 \begin{eqnarray} \label{ModFlowtheta}
  \left\{\begin{array}{l}
   \frac{d}{dt}\theta=-d*_{(\sigma_0+\theta)}d*_{(\sigma_0+\theta)}(\sigma_0+\theta)+d(V(\theta+\sigma_0)\lrcorner(\theta+\sigma_0)),\\\\
   \theta_0=0.
  \end{array}\right.
 \end{eqnarray}

 For wisely chosen $V$, this new flow is elliptic in the direction of closed forms. We hope to show that the new flow has short time existence and then, by applying suitable time-dependent diffeomorphisms to get a solution of the flow (\ref{Flowtheta}).

 However, no existing theory of parabolic equations seems to be  directly applicable. Thus, it seems to us that an argument along the line of inverse function theorems is necessary. In fact, we will use Nash Moser inverse function theorem for tame F\'echet spaces (see \cite{Hamilton1}, pp. 171-172). We introduce 
\begin{eqnarray}\label{OpenU}\mathcal{U}=\left\{\theta\in d C^\infty\left([0,T]\times M, \Lambda^2(M)\right):\sigma_0+\theta \quad {\rm is \quad definite}\right\}.\end{eqnarray} 
This is an open set of the Fr\'echet space $dC^\infty\left([0,T]\times M ,\Lambda^2(M)\right)$. We consider a mapping $F:\mathcal{U}\rightarrow C^\infty\left([0,T]\times M, \Lambda^2(M)\right)\times dC^\infty\left(M,\Lambda^2(M)\right)$ defined by
\begin{eqnarray}\label{F}\theta\mapsto \left(\frac{d}{dt}\theta-\Delta_\sigma\sigma-\mathcal{L}_{V(\sigma)}\sigma,\theta|_{t=0}\right).\end{eqnarray}
where $\sigma=\sigma_0+\theta$ and $V(\sigma)$ is a vector field dependent on 1st order derivatives of $\sigma$. We will prove the following lemma:
\begin{lemma} \label{Lemma}
 Suppose $\theta\in \mathcal{U}$ is a solution to 
\begin{eqnarray}\label{ModFlow}
 \left\{\begin{array}{l}
 \frac{d}{dt}\theta-\Delta_\sigma\sigma-\mathcal{L}_{V(\sigma)}\sigma=\chi\\\\
  \theta(0)=\theta_0.
\end{array}\right.
\end{eqnarray}
Then for $(\overline{\chi},\overline{\theta}_0)$ sufficiently close to $(\chi,\theta_0)$, there is a unique solution $\overline{\theta}(t)$ to 
\begin{eqnarray*}
 \left\{\begin{array}{l}
 \frac{d}{dt}\overline{\theta}-\Delta_{\overline{\sigma}}\overline{\sigma}-\mathcal{L}_{V(\overline{\sigma})}\overline{\sigma}=\overline{\chi}\\\\
  \overline{\theta}(0)=\overline{\theta}_0.
\end{array}\right.
\end{eqnarray*}
\end{lemma}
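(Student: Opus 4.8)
The plan is to prove Lemma~\ref{Lemma} as an application of the Nash--Moser inverse function theorem to the map $F$ defined in~(\ref{F}). The statement that solutions persist under small perturbations of the data $(\chi,\theta_0)$ is precisely the assertion that $F$ is \emph{locally surjective} near a given solution $\theta$, with a local inverse. To invoke the Nash--Moser theorem in the tame Fr\'echet category (\cite{Hamilton1}, pp. 171--172), I would verify three ingredients in turn: (i) $F$ is a smooth tame map between the relevant tame Fr\'echet spaces; (ii) the linearization $DF(\theta)$ is invertible for every $\theta\in\mathcal{U}$; and (iii) the family of inverses $DF(\theta)^{-1}$ is itself a smooth tame map in $\theta$ and the direction being inverted. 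The tameness estimates in (i) and (iii) are the bookkeeping that the Nash--Moser machinery requires in place of the Banach-space open mapping theorem, and they follow from the fact that $F$ and its derivatives are built from smooth algebraic operations (contraction, the nonlinear dependence of $*_\sigma$ on $\sigma$) composed with finitely many differentiations; each such operation satisfies tame estimates on $C^\infty([0,T]\times M)$.

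The crux is step (ii), and for this I would first \emph{compute the linearization} $DF(\theta)[\dot\theta]$ at a point $\theta\in\mathcal{U}$, in the direction of an exact two-form $\dot\theta = d\eta$. Writing $\sigma=\sigma_0+\theta$, the leading (second-order in space) part of $\partial_t\dot\theta - D(\Delta_\sigma\sigma + \mathcal{L}_{V(\sigma)}\sigma)[\dot\theta]$ is, by the design of the DeTurck-type vector field $V(\sigma)$, a heat-type operator $\partial_t - L_\sigma$ where $L_\sigma$ is a second-order linear operator acting on $\dot\theta$. The whole point of the modification by $\mathcal{L}_{V(\sigma)}\sigma$ is that, \emph{restricted to the space of closed (indeed exact) two-forms}, the symbol of $L_\sigma$ becomes elliptic, so that $\partial_t - L_\sigma$ is a genuine linear parabolic operator on that subspace. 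I would make this precise by computing the principal symbol $\sigma_\xi(L_\sigma)$ and checking it is negative-definite on the relevant fiber (using the nondegeneracy of $\sigma$, i.e.\ that $\sigma_0+\theta$ is definite), which is what "elliptic in the direction of closed forms" in the outline should mean.

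Granting this parabolicity, the invertibility of $DF(\theta)$ is then a \emph{linear} problem: for given $(\bar\chi-\chi,\bar\theta_0-\theta_0)=:(\psi,\eta_0)$ I must solve the linear parabolic Cauchy problem $\partial_t\dot\theta - L_\sigma\dot\theta = \psi$, $\dot\theta(0)=\eta_0$, uniquely within exact forms. Here I would appeal to the standard existence, uniqueness, and a priori (Schauder- or $L^2$-type) estimates for linear parabolic systems on the compact manifold $M$, being careful on two points: that the solution stays exact (which should follow because both the forcing and the operator preserve the cohomological structure, the exactness being propagated by applying $d$ to the equation and uniqueness), and that the resulting solution operator obeys estimates \emph{with a fixed loss of derivatives independent of the order}, i.e.\ tame estimates, so that step (iii) holds. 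I expect the main obstacle to be exactly this last point: in the Fr\'echet setting one cannot simply cite a Banach-space implicit function theorem, so the tameness of the inverse family—uniform control of $DF(\theta)^{-1}$ across all $\theta\in\mathcal{U}$ and all times in $[0,T]$, with the right dependence on the number of derivatives—must be extracted from the parabolic estimates rather than assumed. Assembling (i)--(iii) and quoting the Nash--Moser theorem then yields a local smooth tame inverse of $F$ near the solution $\theta$, and evaluating this inverse at the perturbed data $(\bar\chi,\bar\theta_0)$ produces the desired unique nearby solution $\bar\theta$.
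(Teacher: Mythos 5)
Your overall strategy coincides with the paper's: apply the Nash--Moser theorem to the map $F$ of (\ref{F}) on the tame Fr\'echet space of exact forms, reduce invertibility of $F_*$ to linear parabolic theory via the identity $P(\psi)+Q_{-5,-1}(\psi)=-\Delta_\sigma\psi+d\Phi(\psi)$ valid for $d\psi=0$ (Lemma \ref{Parabolicity}), and obtain tameness of the inverse family from Hamilton's theorem on the smooth tame dependence of solutions of linear parabolic systems on their coefficients and data rather than from estimates proved from scratch. The injectivity half of your step (ii) and your steps (i) and (iii) match the paper's argument.

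There is, however, a genuine gap in the surjectivity half of step (ii). You propose to ``solve the linear parabolic Cauchy problem $\partial_t\dot\theta-L_\sigma\dot\theta=\psi$'' and afterwards check that exactness is propagated. But the linearized operator $P+Q_{-5,-1}$ is \emph{not} parabolic on all $3$-forms --- the paper notes explicitly that $\mathcal{P}$ is not elliptic --- and it agrees with the parabolic operator $-\Delta_\sigma+d\Phi$ only once one already knows $d\psi=0$. So there is no linear parabolic Cauchy problem to which standard theory applies until the closedness constraint has been built in, and your propagation argument (``applying $d$ to the equation and uniqueness'') cannot get started, since applying $d$ to $\partial_t\psi=(P+Q)\psi+\chi$ yields nothing useful. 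The paper closes this circle by descending to potentials: write $\chi=d\xi$ and $\psi_0=d\beta_0$, solve the genuinely parabolic equation
\[
\frac{d}{dt}\beta=-\Delta_\sigma\beta+\Phi(d\beta)+\xi,\qquad\beta(0)=\beta_0,
\]
for the $2$-form $\beta$, and set $\psi=d\beta$; then $\psi$ is exact by construction, $d$ commutes with $\Delta_\sigma$, and Lemma \ref{Parabolicity} converts the resulting equation for $\psi$ back into $\partial_t\psi-P(\psi)-Q_{-5,-1}(\psi)=\chi$. (An equivalent fix is to solve $\partial_t\psi=-\Delta_\sigma\psi+d\Phi(\psi)+\chi$ on all $3$-forms, which is parabolic, and only then verify closedness and exactness of the solution so that the two operators agree along it --- but some such device must be made explicit.) A smaller omission: you should also justify that $dC^\infty([0,T]\times M,\Lambda^2(M))$ is a tame Fr\'echet space; the paper does this by exhibiting it as a direct summand of $C^\infty([0,T]\times M,\Lambda^3(M))$ via the tame projection $dGd^*$ built from the Green's operator.
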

 
It is for the proof of Lemma \ref{Lemma} that we use Nash Moser inverse function theorem. To use the theorem, we need to show:
\begin{enumerate}
 \item The linearized map $F_*$ at $\sigma$ is injective;
 \item $F_*$ is surjective;
 \item The inverse map $F_*^{-1}$ is a  {\it smooth tame} map.                                        
\end{enumerate}
Roughly speaking, (3) means that the solutions to linearized equations satisfy certain a priori estimates. 

Below is the structure of the rest of this paper. In \textsection \ref{G2str}, we review $G_2$ structures. We discuss $G_2$ connections and the Levi-Civita connections. These will be useful in constructing vector fields used in Deturck's Trick. Most importantly, differential identities of $G_2$ holonomy are discussed. These identities are discovered in \cite{BryantG2} and are basis for our later computations. Hitchin's functional will also be discussed in more detail. In \textsection \ref{Proof}, we give a detailed proof of Theorem \ref{Thm} assuming Lemma \ref{Lemma}. In \textsection\ref{ProofLemma}, we prove Lemma \ref{Lemma}. \\

The work was done when the second named author was an MSRI postdoctoral fellow. He would like to thank MSRI for hospitality.

\section{Manifolds with $G_2$-structures}\label{G2str}

 In this section, we collect basic facts about $G_2$-structures on a 7-dimensional manifold $M$. For details, see \cite{BryantG2}.

\subsection{Lie group $G_2$}
Let $V=\mathbf{R}^7$ be the 7-dimensional Euclidean space. Let $\{e_1,e_2,\cdots,e_7\}$ be the standard basis of $V$ and let $\{e^1,\cdots,e^7\}$ be the dual basis. 

Define a 3-form 
\begin{eqnarray}
 \phi=e^{123}+e^{145}+e^{167}+e^{246}-e^{257}-e^{347}-e^{256}
\end{eqnarray}
 where $e^{123}=e^1\wedge e^2\wedge e^3$, etc.

We have the following characterization of $G_2$:
\begin{lemma}[\cite{Schouten},\cite{BryantHolonomy}]
 \begin{eqnarray}G_2=\{g\in GL(V)|g^*\phi=\phi\}.\end{eqnarray}
\end{lemma}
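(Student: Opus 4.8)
The plan is to show that $G := \{g\in GL(V) : g^*\phi = \phi\}$ is a $14$-dimensional compact connected subgroup of $SO(7)$ and then to identify it with the compact form of $G_2$. There are two essentially independent ingredients. First, $\phi$ canonically determines an inner product and orientation on $V$, which forces $G\subseteq SO(7)$ and hence compactness. Second, a dimension count (openness of the $GL(V)$-orbit of $\phi$) gives $\dim G = 14$, and a transitivity argument on the unit sphere pins the group down up to isomorphism.

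First I would produce the metric from $\phi$. Consider the symmetric, $\Lambda^7 V^*$-valued bilinear form
\begin{eqnarray*}
B_\phi(X,Y) = (X\lrcorner\phi)\wedge(Y\lrcorner\phi)\wedge\phi, \qquad X,Y\in V.
\end{eqnarray*}
A direct computation with the explicit $\phi$ shows $B_\phi = 6\, g_0 \otimes \operatorname{vol}_0$, where $g_0$ is the standard inner product and $\operatorname{vol}_0 = e^1\wedge\cdots\wedge e^7$. Since $B_\phi$ is built naturally from $\phi$ (it intertwines the $GL(V)$-action on $3$-forms with the induced action on $\operatorname{Sym}^2 V^* \otimes \Lambda^7 V^*$), any $g$ with $g^*\phi = \phi$ preserves $B_\phi$; extracting the genuine metric $g_\phi$ from $B_\phi$ by a fixed algebraic normalization (dividing by the appropriate power of its own determinant) then shows $g$ preserves both $g_\phi$ and $\operatorname{vol}_\phi$. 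Hence $G\subseteq SO(7)$, so $G$ is a closed subgroup of a compact group and is itself a compact Lie group.

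Next I would compute $\dim G$ through its Lie algebra $\mathrm{g}_\phi = \{A\in \mathrm{gl}(V) : A\cdot\phi = 0\}$, where $A$ acts on forms as a derivation. The linear map $\rho:\mathrm{gl}(V)\to\Lambda^3 V^*$, $A\mapsto A\cdot\phi$, has image equal to the tangent space of the $GL(V)$-orbit of $\phi$; evaluating $\rho$ on the $49$ standard generators against the explicit $\phi$, one checks that $\rho$ is surjective, so the orbit is open and $\dim\ker\rho = 49-35 = 14$. By the previous step $\ker\rho\subseteq\mathrm{so}(7)$ automatically, so $\dim G = 14$.

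Finally, to identify $G$ with $G_2$, I would study its action on the unit sphere $S^6\subset(V,g_\phi)$. For a unit vector $u$, the pair $(u\lrcorner\phi,\ \phi|_{u^\perp})$ equips $u^\perp\cong\mathbf{R}^6$ with an $SU(3)$-structure, and a short argument shows the stabilizer of $u$ in $G$ is exactly $SU(3)$; combined with transitivity on $S^6$ this yields a fibration $SU(3)\to G\to S^6$, whence $G$ is connected of dimension $6+8=14$ and coincides with the compact, connected, simply connected group $G_2$. The main obstacle is precisely this last step: proving transitivity on $S^6$ and identifying the point-stabilizer with $SU(3)$ carries the real geometric content, since it requires showing that $u\lrcorner\phi$ restricts to a compatible Hermitian form on $u^\perp$ and that $\phi|_{u^\perp}$ is the real part of the associated complex volume form. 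The dimension count by itself only identifies the Lie algebra (the unique $14$-dimensional compact simple Lie algebra being $\mathrm{g}_2$); upgrading this to a group isomorphism and excluding extra components is exactly what the transitivity argument secures.
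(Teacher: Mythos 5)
The paper does not prove this lemma at all --- it is quoted from the cited references (Schouten and Bryant's holonomy paper) --- and your outline is essentially the standard argument given there: the $\Lambda^7V^*$-valued form $B_\phi$ (with the correct constant $6$) forces the stabilizer into $SO(7)$ after the determinant normalization, openness of the $GL(V)$-orbit gives dimension $49-35=14$, and the fibration $SU(3)\to G\to S^6$ supplies connectedness, simple connectivity, and the identification with $G_2$. The sketch is correct as far as it goes; the one imprecision is the parenthetical appeal to ``the unique $14$-dimensional compact simple Lie algebra,'' since simplicity does not follow from the dimension count alone (one must still exclude $\mathfrak{su}(3)\oplus\mathfrak{su}(2)\oplus\mathfrak{su}(2)$, e.g.\ using that $G$ acts faithfully and irreducibly on $V$), but you correctly identify the transitivity-plus-stabilizer computation as the step carrying the real content.
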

The Lie group $G_2$ is compact, connected, simply connected and simple. It acts irreducibly on $V$ and preserves the metric and orientation for which $\{e_1, \cdots, e_7\}$ is an oriented orthonormal basis. We use $*_\phi$ (or $*$ when there is no danger of confusion) to denote the Hodge star operator determined by the orientation and the metric. It particular, $G_2$ also preserves the $4$-form
\begin{eqnarray}
 *_\phi\phi=e^{4567}+e^{2367}+e^{2345}+e^{1357}-e^{1346}-e^{1256}-e^{1247}.
\end{eqnarray}

  Using the metric, we identify $V$ and its dual $V^*$ through musical isomorphisms 
\[^\flat:V\rightarrow V^*\]
and 
\[_\#:V^*\rightarrow V.\]

Under the action of $G_2$, the spaces $\Lambda^1V^*$ and $\Lambda^6V^*$ are irreducible while the other wedge products $\Lambda^*(V^*)$ decomposes into irreducible components:
\begin{eqnarray}
 \Lambda^2V^*=\Lambda^2_7\oplus \Lambda^2_{14}\\
 \Lambda^3V^*=\Lambda^3_1\oplus \Lambda^3_7\oplus \Lambda^3_{27}\\
\Lambda^4V^*=\Lambda^4_1\oplus \Lambda^4_7\oplus \Lambda^4_{27}\\
 \Lambda^5V^*=\Lambda^5_7\oplus \Lambda^5_{14}.
\end{eqnarray}
Here $\Lambda^p_d$ denotes an irreducible $G_2$ representation of dimension $d$. Different spaces in this list with the same dimension are naturally isomorphic. For example, $*:\Lambda^4_{27}\simeq\Lambda^3_{27}$. The space $\Lambda^2_{14}$ is naturally isomorphic to the Lie algebra $\mathfrak{g}_2$ through musical isomorphisms.

The space $\Lambda^3_{27}$ deserves special attention. An explicit algebraic description of this space is \cite{BryantG2}
 \[\Lambda^3_{27}=\{\gamma\in \Lambda^3|\gamma\wedge \phi=0, \gamma\wedge *\phi=0\}.\]
 As an irreducible representation of $G_2$, it has the highest weight $(2,0)$. The space of traceless symmetric bilinear forms on $V$ is also $27$-dimensional and has the highest weight $(2,0)$. So there must be a $G_2$-equivariant isomorphism between them. One such is given in \cite{BryantG2}: Define $\mathbf{j}_\phi:\Lambda^3_{27}\rightarrow {\rm Sym}^2_0(V)$ by
\begin{eqnarray}
 \mathbf{j}_\phi(\gamma)(v,w)=*_\phi\left((v\lrcorner \phi)\wedge (w\lrcorner \phi)\wedge\gamma\right).
\end{eqnarray}
 
\subsection{$G_2$-structures}
Suppose now $M$ is a 7-dimensional smooth manifold. For each $x\in M$, we define the set definite 3-forms 
\[(\Lambda^3_{+})_x=\{\sigma_x\in \Lambda^3T^*_xM|\exists u\in {\rm Hom}_{\mathbf{R}}(T_xM,V), u^*\phi=\sigma_x\}.\]
We call 
\[\Lambda^3_+(M)=\coprod_x(\Lambda^3_+)_x\]
the bundle of definite 3-forms. It is a subbunle of $\Lambda^3(M)$. In fact, $(\Lambda^3_+)_x$ contains a single open orbit of $GL(T_xM)$. Thus the bundle $\Lambda^3_+(M)$ is open in $\Lambda^3$.
\begin{definition}
 A section $\sigma$ of the bundle $\Lambda^3_+(M)$ is called a $G_2${\emph -structure} on $M$.
\end{definition}

A $G_2$-structure on $M$ reduces the total coframe bundle $\mathcal{F}$ to a principal $G_2$-subbundle 
\[F_\sigma=\{(x,u)|u\in {\rm Hom}_{\mathbf{R}}(T_xM,V), u^*\phi=\sigma_x\}.\] 

Because $G_2$ acts reducibly on $\Lambda^pV^*$ for $2\leq p\leq 5$, the vectors bundles of forms $\Lambda^p(M)$ on $M$ decomposes as direct sums correspondingly. For instance
   \[\Lambda^2(M)=\Lambda^2_7(M)\oplus\Lambda^2_{14}(M)\]
for $\Lambda^2_7(M)=F_\sigma\times_{G_2}\Lambda^2_7V^*$ and for $\Lambda^2_{14}(M)=F_\sigma\times_{G_2}\Lambda^2_{14}V^*$.
\subsubsection{Connections and torsions}
  We discuss various connections determined by a $G_2$-structure $\sigma$. 

  First, associated with $\sigma$, there is a canonical metric, denoted by $g^\sigma$. On each tangent space $T_xM$, $g^\sigma$ is simply the pull-back by $u$ of the standard metric on $V$ for any $u\in F_\sigma|_x$. This does not depend on the choice of $u$
 because $u$ only differs by an element of $G_2$. 

 The oriented orthonormal coframe bundle of $g^\sigma$ is obtained from $F_\sigma$ by extension of the structure group to $SO(7)$, i.e., $F_\sigma\cdot SO(7)\subset \mathcal{F}$. 
 
We denote by $\omega=(\omega_{ij})$ the Levi-Civita connection on $F_\sigma\cdot SO(7)$. It is an $\mathfrak{so}(7)$-valued one-form. If $\{\omega_i\}$ is a local orthonormal coframe, i.e., a local section of $F_\sigma\cdot SO(7)$, we have 
  \[d\omega_i=-\omega_{ij}\wedge \omega_j.\]
The dual frame field $e_i$ has covariant derivatives 
  \[\nabla^\sigma_X e_i=e_j\omega_{ji}(X).\]

When we restrict $\omega$ to the subbundle $F_\sigma$, it decomposes as 
  \[\omega=\theta\oplus \tau\]
where $\theta$ takes value in the Lie algebra $\mathfrak{g}_2\subset \mathfrak{so}(7)$ and $\tau$ takes value in $\mathfrak{so}(7)/\mathfrak{g}_2$. It is easy to show that $\theta$ defines a $G_2$-connection on $F_\sigma$. It follow that $\tau$ is semibasic with respect to $F_\sigma\rightarrow M$. Thus $\tau$ is a section of $F_\sigma\times_{G_2}(\mathfrak{so}(7)/\mathfrak{g}_2\otimes V^*)$.
\begin{definition}
 We call $\tau$ the {\emph torsion} tensor of the connection $\theta$.
\end{definition}
For this reason, we call $V_\tau=\mathfrak{so}(7)/\mathfrak{g}_2\otimes V^*$ the torsion space of $G_2$. As an representation of $G_2$, $V_\tau\simeq V^*\otimes V^*$ and thus decomposes into 
\[V_\tau=V_{(0,0)}\oplus V_{(1,0)}\oplus V_{(0,1)}\oplus V_{(2,0)}\] 
where $V_{(p,q)}$ denotes the irreducible representation with the highest weight $(p,q)$, in particular $V_{(1,0)}=V$ and $V_{(0,1)}=\mathfrak{g}_2$. 

Correspondingly $\tau$ has four components, $\tau_0$, $\tau_1$, $\tau_2$, and $\tau_3$. These components show up in the differentials of $\sigma$ and $d*\sigma$:
\begin{proposition}[\cite{BryantG2}]\label{Torsion}
For any $G_2$-structure, there exist unique differential forms $\tau_0\in \Omega^0(M)$, $\tau_1\in \Omega^1(M)$, $\tau_2\in \Omega^2_{14}(M)$, and $\tau_3\in \Omega^3_{27}(M)$ so that the following equations hold:
 \begin{eqnarray}
  d\sigma=\tau_0*\sigma+3\tau_1\wedge\sigma+*\tau_3,\\
  d*\sigma=4\tau_1\wedge*\sigma+\tau_2\wedge\sigma.  
 \end{eqnarray}
\end{proposition}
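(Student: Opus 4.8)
The plan is to reduce the statement to a purely representation-theoretic computation on the $G_2$-frame bundle $F_\sigma$, where everything becomes constant-coefficient algebra. First I would pull the forms $\sigma$ and $*\sigma$ back to $F_\sigma$. By the defining property of $F_\sigma$, these pullbacks are the fixed forms $\phi$ and $*\phi$ expressed in the tautological soldering forms $\omega_i$, so they have constant coefficients. Differentiating and substituting the structure equation $d\omega_i=-\omega_{ij}\wedge\omega_j$ together with the splitting $\omega_{ij}=\theta_{ij}+\tau_{ij}$ then expresses $d\sigma$ and $d*\sigma$ on $F_\sigma$ as linear expressions in the connection. Because $\theta$ takes values in $\mathfrak{g}_2$ and $G_2$ fixes $\phi$ (so $\mathfrak{g}_2$ annihilates $\phi$ and $*\phi$ infinitesimally), the $\theta$-terms drop out, leaving $d\phi$ and $d(*\phi)$ given by explicit $G_2$-equivariant linear maps applied to the torsion $\tau$.

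Next I would analyze the two equivariant maps $V_\tau\to\Lambda^4V^*$ and $V_\tau\to\Lambda^5V^*$ by Schur's lemma, using the decompositions already recorded: $V_\tau=V_{(0,0)}\oplus V_{(1,0)}\oplus V_{(0,1)}\oplus V_{(2,0)}$, together with $\Lambda^4V^*=\Lambda^4_1\oplus\Lambda^4_7\oplus\Lambda^4_{27}$ (types $(0,0),(1,0),(2,0)$) and $\Lambda^5V^*=\Lambda^5_7\oplus\Lambda^5_{14}$ (types $(1,0),(0,1)$). Schur forces each irreducible summand of $\tau$ to map to the matching summand of the target by a scalar. In particular the $\mathfrak{g}_2$-component $\tau_2$, of type $(0,1)$, cannot appear in $d\sigma$, since $\Lambda^4V^*$ contains no $(0,1)$-summand; symmetrically $\tau_0$ and $\tau_3$, of types $(0,0)$ and $(2,0)$, cannot appear in $d*\sigma$. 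Identifying the surviving summands through the isomorphisms $\Lambda^4_1=\mathbf{R}\,{*\sigma}$, $*:\Lambda^4_{27}\cong\Lambda^3_{27}$, and the injections $\alpha\mapsto\alpha\wedge\sigma$ of $\Lambda^1$ into $\Lambda^4_7$, $\gamma\mapsto\gamma\wedge\sigma$ of $\Lambda^2_{14}$ into $\Lambda^5_{14}$, I would read off the two stated formulas, with the scalar factors $\tau_0$, $3$, and $4$ emerging from the explicit evaluation of the equivariant maps.

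Uniqueness is then immediate, since each irreducible component of $d\sigma$ and of $d*\sigma$ determines the corresponding $\tau_i$ uniquely, all the identifying maps above being isomorphisms onto their images. The one genuinely nontrivial point, and the step I expect to be the main obstacle, is to verify that the \emph{same} $1$-form $\tau_1$ governs both the $\Lambda^4_7$-part of $d\sigma$ and the $\Lambda^5_7$-part of $d*\sigma$. This is automatic in the frame-bundle approach, since both arise from the single type-$(1,0)$ component of the torsion $\tau$; but making it concrete requires carrying out the explicit contraction of $\phi$ and $*\phi$ against $\tau$ and checking that the two Schur scalars are nonzero and equal, respectively, to the constants $3$ and $4$ in the statement. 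The remaining effort is the bookkeeping of these constant-coefficient computations, which I would organize by choosing a basis adapted to the splitting $\mathfrak{so}(7)=\mathfrak{g}_2\oplus(\mathfrak{so}(7)/\mathfrak{g}_2)$.
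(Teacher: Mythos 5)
The paper offers no proof of this proposition --- it is quoted directly from \cite{BryantG2} --- but your frame-bundle argument (pull back to $F_\sigma$, differentiate the constant-coefficient expressions for $\phi$ and $*\phi$ via the structure equations, discard the $\mathfrak{g}_2$-valued part $\theta$, and apply Schur's lemma to the resulting equivariant maps $V_\tau\to\Lambda^4V^*$ and $V_\tau\to\Lambda^5V^*$) is exactly the mechanism behind the cited result and is consistent with the paper's own definition of $\tau$ as a section of $F_\sigma\times_{G_2}\left(\mathfrak{so}(7)/\mathfrak{g}_2\otimes V^*\right)$. Your outline is correct, and you correctly isolate the one nontrivial point --- that a single $\tau_1$ governs both equations --- as the verification that the two Schur scalars attached to the $(1,0)$-component of $\tau$ stand in the ratio $3:4$; this constant check is the only content you defer, and it is genuinely just bookkeeping once the equivariant setup is in place.
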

Various $G_2$ structures with one or several of the components vanishing received much interest in the literature. We are mainly interested in {\emph closed} $G_2$-structures.
\begin{definition}
 A $G_2$-structure is {\emph closed} if its definite 3-form $\sigma$ is closed.
\end{definition}

By Proposition \ref{Torsion}, the condition $d\sigma=0$ is equivalent to $\tau_0=\tau_1=\tau_3=0$. Thus the only torsion component is a 2-form $\tau_2\in \Omega^2_{14}$. 
\begin{definition}
 A $G_2$-structure is called {\emph 1-flat} if $d\sigma=d*\sigma=0$.
\end{definition}

 Equivalently, $\sigma$ is {\emph 1-flat} if and only if $\Delta_\sigma\sigma=0$ where $\Delta_\sigma$ is the Hodge Laplacian of $g^\sigma$. In this case, the connections $\omega$ and $\theta$ coincide and the holonomy of the Levi-Civita connection lies in $G_2$. 
 \subsection{Differential identities}
By composing the exterior differential operator $d$ with various algebraic bundle isomorphisms, we can define many differential operators $d^p_q:\Omega_p\rightarrow \Omega_q$ for all $p,q\in \{1,7,14,27\}$. More explicitly, for $f\in \Omega^0$, $\alpha\in \Omega^1_7$, $\beta\in \Omega^2_{14}$ and $\gamma\in \Omega^3_{27}$,  
\begin{eqnarray*}
d^1_7f=df\\
d^7_7\alpha=*d(\alpha\wedge*\sigma)\\
d^7_{14}\alpha=\pi^2_{14}d\alpha\\
d^7_{27}\alpha=\pi^3_{27}d*(\alpha\wedge*\sigma)\\
d^{14}_{27}\beta=\pi^{3}_{27}d\beta\\
d^{27}_{27}\gamma=*(\pi^4_{27}d\gamma)
\end{eqnarray*}
where $\pi^k_p$ denote the projection onto $\Lambda^k_p$.
and the rest are determined by 
\[(d^p_q)^*=d^q_p\] for $p\neq q$
as well as
 \[d^1_{14}=d^1_{27}=d^1_1=d^{14}_{14}=0.\]
\subsubsection{Torsion free case}
When the $G_2$ structure is torsion free, the Levi-Civita connection $\nabla^\sigma$ coincides with the $G_2$ connetion and its holonomy lies in $G_2$. As in the case of K\"ahler geometry, there are various differential identities by abstract nonsense. These identities are listed in \cite{BryantG2}, pp. 25 (except that the minus sign in the formula $d(\alpha\wedge*\sigma)=-*d^7_7\alpha$ is a typo). These identities are important for our computations. The proof is based on abstract nonsense and constant checking. Besides the identities listed there, We need one more that we now describe. 

Recall that $\Omega^3_{27}$ consists of 3-forms whose wedge products with $\sigma$ and $*\sigma$ vanish. It maybe identified with ${\rm Sym}^2_0$--the space of trace free symmetric bilinear forms. An explicit map is given in \cite{BryantG2} by 
\begin{eqnarray}
 \mathbf{j}_\sigma(\gamma)(\phi)(v,w)=*_\sigma((v\lrcorner \sigma)\wedge(w\lrcorner \sigma)\wedge\gamma)
\end{eqnarray}
for any $\gamma\in \Omega^3_{27}$, $v\in TM$ and $w\in TM$. By definition $d^{27}_{7}\gamma$
takes value in $\Omega^1_7$. So is ${\rm div}(\mathbf{j}_\sigma(\gamma))$. Here we define 
  \[{\rm div}(h)=g^{jl}g^{ik}h_{ij,k}e_l\]
for any symmetric two tensor $h_{ij}$. They must agree up to a constant.
\begin{lemma}\label{DivId}
 If the $G_2$ structure is torsion free, we have
 \begin{eqnarray}
  (d^{27}_7 (\gamma))_\#= A \cdot{\rm div}(\mathbf{j}_\sigma(\gamma))
 \end{eqnarray}
for some universal nonzero constant $A$. 
\end{lemma}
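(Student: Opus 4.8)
The plan is to recognize that, in the torsion-free case, both $(d^{27}_7\gamma)_\#$ and ${\rm div}(\mathbf{j}_\sigma(\gamma))$ are values of \emph{natural first-order $G_2$-equivariant differential operators} applied to $\gamma$, and then to invoke Schur's lemma to force proportionality. The crucial input is that torsion-freeness makes the Levi-Civita connection $\nabla^\sigma$ a $G_2$-connection (its holonomy lies in $G_2$), so $\nabla^\sigma$ is parallel for the splitting $\Lambda^\bullet(M)=\bigoplus_d\Lambda^\bullet_d(M)$ into irreducible subbundles, parallel for $*$, and commutes with the algebraic isomorphism $\mathbf{j}_\sigma$. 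Consequently every operator built from $d$ and the bundle isomorphisms — in particular $d^{27}_7$ — can be rewritten as a $G_2$-equivariant contraction of $\nabla^\sigma$ (using that $d$ is the antisymmetrization of $\nabla^\sigma$ and that all the projections $\pi^k_p$ are parallel); the same is manifestly true of ${\rm div}$. Both therefore belong to the space $\mathcal{O}$ of first-order $G_2$-equivariant operators $\Gamma({\rm Sym}^2_0)\to\Gamma(V)$, where I identify $\Omega^3_{27}\cong{\rm Sym}^2_0\cong V_{(2,0)}$ via $\mathbf{j}_\sigma$ and $\Omega^1_7\cong V=V_{(1,0)}$.

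Next I would compute $\dim\mathcal{O}$. Since $V_{(2,0)}$ and $V_{(1,0)}$ are non-isomorphic irreducibles, Schur's lemma rules out any zeroth-order equivariant term, so every element of $\mathcal{O}$ is determined by its symbol in ${\rm Hom}_{G_2}(V^*\otimes{\rm Sym}^2_0,\,V)$. As all $G_2$-representations are self-dual, Frobenius reciprocity gives
\[
\dim{\rm Hom}_{G_2}(V\otimes V_{(2,0)},\,V)=\dim{\rm Hom}_{G_2}(V_{(2,0)},\,V\otimes V),
\]
which is the multiplicity of $V_{(2,0)}$ in $V\otimes V$. From the decomposition $V\otimes V=V_{(0,0)}\oplus V_{(1,0)}\oplus V_{(0,1)}\oplus V_{(2,0)}$ — equivalently ${\rm Sym}^2V=\langle g^\sigma\rangle\oplus{\rm Sym}^2_0$ and $\Lambda^2V=\Lambda^2_7\oplus\Lambda^2_{14}$, of dimensions $1+27$ and $7+14$ summing to $49$ — this multiplicity is exactly $1$. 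Hence $\dim\mathcal{O}=1$, and since ${\rm div}\circ\mathbf{j}_\sigma$ is visibly an element of $\mathcal{O}$, there is a constant $A$ with $(d^{27}_7\gamma)_\#=A\cdot{\rm div}(\mathbf{j}_\sigma(\gamma))$. The constant is \emph{universal} because the identification $\mathbf{j}_\sigma$ and the connection $\nabla^\sigma$ are both canonically attached to $\sigma$, so the ratio is fixed once and for all by representation theory and is independent of the particular torsion-free structure.

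The main obstacle is showing $A\neq0$, i.e. ruling out the degenerate possibility that $d^{27}_7$ is the zero operator (in which case the identity would hold vacuously with $A=0$). I would settle this by an explicit test. Since $d^{27}_7=(d^7_{27})^*$, the operator $d^{27}_7$ vanishes if and only if $d^7_{27}\alpha=\pi^3_{27}\,d*(\alpha\wedge*\sigma)$ does, so it suffices to exhibit a single case where one of these is nonzero. The cleanest way is to compute the principal symbol on a linear model: work on flat $\mathbf{R}^7$ with the standard form $\phi$, choose a covector $\xi$ and a $\gamma\in\Lambda^3_{27}$ with prescribed first-order jet, and evaluate $\pi^3_{27}\,d*(\alpha\wedge*\sigma)$ for a correspondingly chosen $\alpha\in\Omega^1_7$. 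Because $A$ is universal, a nonzero value in any one such example forces $A\neq0$ in general; carrying out this single symbol computation and comparing it with the matching value of ${\rm div}(\mathbf{j}_\sigma(\gamma))$ simultaneously confirms $A\neq0$ and determines its numerical value.
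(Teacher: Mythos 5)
Your proposal is correct and follows essentially the same route as the paper: both arguments use the torsion-free hypothesis to make $\nabla^\sigma$ a $G_2$-connection (so that $\mathbf{j}_\sigma$ and the projections are parallel and $d$ is the skew-symmetrization of $\nabla^\sigma$), and then apply Schur's lemma at the symbol level to the $G_2$-equivariant maps involved, the paper doing this via an explicit commuting square through $\Lambda^4_7$ rather than by classifying all first-order equivariant operators ${\rm Sym}^2_0\to V$. You are in fact somewhat more careful than the paper on two points it glosses over --- the multiplicity-one computation ${\rm Hom}_{G_2}(V\otimes V_{(2,0)},V)\cong\mathbf{R}$ that Schur's lemma actually requires, and the verification that $A\neq 0$ (which the paper asserts and defers to a remark, even though $A\neq 0$ is needed later to define $V_2$).
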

\begin{proof}
 Consider the following diagram
\[\begin{array}{lllll}
\Lambda^3_{27}\otimes \Lambda^1&\stackrel{\delta}{\longrightarrow}&\Lambda^4&\stackrel{\pi^4_{7}}{\longrightarrow}&\Lambda^4_{7}\\
\downarrow \mathbf{j}_\sigma\otimes 1&       &        &                               &  \\ 

{\rm Sym}^2_0\otimes \Lambda^1&\stackrel{c}{\longrightarrow}&\Lambda^1&\stackrel{\wedge\sigma}{\longrightarrow}&\Lambda^4_7
\end{array}\]
where $\delta$ denotes the skewsymmetrization, $c$ denotes the contraction using the metric and $\pi^4_7$ is the orthogonal projection. All spaces have a natural $G_2$ action and all linear maps are $G_2$-equivariant. By Schur's Lemma, there must be a nonzero constant $A,$ so that 
 \[\pi^4_7\circ\delta (\gamma\otimes\alpha)=A'\cdot c(\mathbf{j}_\sigma(\gamma)\otimes\alpha)\wedge\sigma.\]

On the other hand, we have \[d(\gamma)=\delta\circ \nabla^\sigma\gamma,\]
      \[\frac{1}{4}d^{27}_7\gamma\wedge\sigma=\pi^4_7 d(\gamma)=\pi^4_7\circ\delta\circ\nabla^\sigma\gamma\] and 
     \[{\rm div}(\mathbf{j}_\sigma(\gamma))^\flat=c(\nabla^\sigma(\mathbf{j}_\sigma(\gamma)))=c(\mathbf{j}_\sigma\otimes 1(\nabla^\sigma\gamma)).\]
since $\nabla^\sigma$ coincides with the $G_2$ connection and thus $\mathbf{j}_\sigma$ is $\nabla^\sigma$-parallel. 
Combining all these we get
\[d_7^{27}\gamma=A\cdot {\rm div}(\mathbf{j}_\sigma(\gamma))^\flat\]
for $A=4A'$.
\end{proof}
\begin{remark}[How to determine $A$]
 Because it is universal, we might simply assume $M=\mathbf{R}^7$. Since its value will not be important for us, we do not pursue further. 
\end{remark}
\subsubsection {Torsion case}\label{TorsionId}
 When the underlying $G_2$ structure has torsion, the identity in Lemma \ref{DivId} and the differential identities in \cite{BryantG2} are no longer true in general. One needs to modify the identities by lower order derivatives. However, since we will only be interested in the principal symbols of various differential operators, these lower order terms are not essential. Thus when we do computations, we may proceed as if the underlying structure were torsion free. This observation is very important. Hopefully this rule will be clear once we do concrete calculations.
\subsection{The Laplacian flow}
The flow was introduced in \cite{BryantG2}. The idea is to evolve the definite 3-form $\sigma$ in the direction of its Hodge Laplacian:
 \begin{eqnarray}
  \frac{d}{dt}\sigma=\Delta_\sigma\sigma.
 \end{eqnarray}
Note that, since $\Delta$ depends on $\sigma$ itself, this flow is nonlinear. It has a large symmetry group: In fact, it is preserved by diffeomorphisms. Thus, it is not parabolic. 

The stable solutions are, of course, given by $1-flat$ $G_2$-structures. Then, as is well-known, the underlying metric is Ricci-flat. In this sense, the flow resembles the Ricci flow or the K\"ahler Ricci flow.

Suppose now $\sigma(t)$ is a solution. If the initial structure $\sigma_0$ is closed, then it is not hard to show that a solution $\sigma(t)$ remains closed. Then the flow equation satisfied by $\sigma(t)$ simplifies to 
 \begin{eqnarray}
  \frac{d}{dt}=-d*d*\sigma.
 \end{eqnarray}
Thus $\sigma(t)$ stays in the same cohomology class $[\sigma_0]$. 
\subsection{Hitchin's functional}
Suppose $\sigma_0$ is a closed $G_2$-structure on compact $M^7$. Define 
  \[[\sigma_0]_+=\{\sigma_0+d\beta|\beta\in \Omega^2(M)\}.\]
In \cite{Hitchin}, Hitchin defined a volume functional on $[\sigma_0]$:
 \begin{eqnarray}
  V(\sigma)=\int_M\sigma\wedge*\sigma.
 \end{eqnarray}
He computed the first variation of this functional 
 \begin{eqnarray}\label{VarV}
  \delta V(\delta\sigma)=\frac{7}{18}\int_M \delta \sigma\wedge*\sigma,
 \end{eqnarray}
where $\delta\sigma$ is an exact 3-form. 

The Laplacian flow may be viewed as a gradient flow of $V$ with respect to an unusual metric on $[\sigma_0]$. We now describe this metric. First, note that the tangent space of $[\sigma_0]$ at any point $\sigma\in [\sigma_0]$ is simply $d\Omega^2(M)$. Let $G_\sigma$ be the Green's operator of the Hodge Laplacian $\Delta_\sigma$. We define for any $\psi,\psi'\in d\Omega^2(M)$,
  \begin{eqnarray}\label{WeirdMetric}
   \langle\psi,\psi' \rangle_\sigma=\frac{7}{18}\int_M g^{\sigma}(G_\sigma\psi,\psi')*_\sigma 1.
  \end{eqnarray}
It can be shown that (\ref{WeirdMetric}) in deed defines a metric on $d\Omega^2(M)$.   

Also note $\Delta_\sigma G_\sigma\psi=\psi$ for any exact form $\psi$. With this in mind, we now rewrite (\ref{VarV}) as
 \begin{eqnarray*}
  \begin{array}{lcl}
    \delta V(\delta\sigma)&=&\frac{7}{18}\int_M \delta \sigma\wedge*\sigma\\\\
                          &=&\frac{7}{18}\int_M g^\sigma(\delta\sigma,\sigma) *_\sigma 1\\\\
                          &=&\frac{7}{18}\int_Mg^\sigma(\Delta_\sigma G_\sigma\delta\sigma,\sigma) *_\sigma 1\\\\
                          &=&\frac{7}{18}\int_Mg^\sigma(G_\sigma\delta\sigma,\Delta_\sigma\sigma) *_\sigma 1\\\\
                          &=&\langle \delta_\sigma, \Delta_\sigma\sigma\rangle_\sigma.
\end{array}
 \end{eqnarray*}
Thus under the metric (\ref{WeirdMetric}), the gradient vector field of $V$ is $\Delta_\sigma\sigma$.

\section{Deturck's Trick}
In this section, we apply Deturck's Trick. We have to find the vector field in $\mathcal{L}_{V(\sigma)}\sigma$ so that (\ref{ModFlowtheta}) is strictly parabolic in the direction of closed forms. The vector field is constructed in a similar way to \cite{De}.
 \subsection{Deformation of $G_2$-structures}
The result is due to D. Joyce \cite{Joyce}.
\begin{proposition}
 Let $\sigma_s$ be a family of $G_2$ structures on $M$. Then there exist three differential forms $f^0\in \Omega^0$, $f^1\in \Omega^1$ and $f^3\in \Omega^3_{27}$ uniquely characterized by 
\begin{eqnarray}
 \frac{\partial}{\partial s}\sigma_s=3f^0\sigma+*_\sigma(f^1\wedge\sigma)+f^3.
\end{eqnarray}
 In terms of these, the variation of the metric $g^\sigma$ is given by 
  \begin{eqnarray}
   \frac{\partial}{\partial s}g^\sigma=2f^0g^\sigma+\frac{1}{2}\mathbf{j}_\sigma(f^3).
  \end{eqnarray}
The variation of the dual 4-form $*_\sigma\sigma$ is
\begin{eqnarray}
 \frac{\partial}{\partial s}(*_\sigma\sigma)=4f^0*_\sigma\sigma+f^1\wedge\sigma-*_\sigma f^3.
\end{eqnarray}
\end{proposition}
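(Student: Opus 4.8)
The plan is to reduce all three assertions to pointwise linear algebra and $G_2$-representation theory, in the same spirit as the determination of the constant $A$ in Lemma~\ref{DivId}. For the decomposition itself, note that at each point $\frac{\partial}{\partial s}\sigma_s$ is an element of $\Lambda^3T^*_xM$, so I would simply invoke the $G_2$-splitting $\Lambda^3=\Lambda^3_1\oplus\Lambda^3_7\oplus\Lambda^3_{27}$ recorded in \textsection\ref{G2str}. Since $\Lambda^3_1=\mathbf{R}\cdot\sigma$ and since $f^1\mapsto *_\sigma(f^1\wedge\sigma)$ is a $G_2$-equivariant isomorphism $\Lambda^1\to\Lambda^3_7$ (both carry the irreducible $V=(1,0)$), the three components of $\frac{\partial}{\partial s}\sigma_s$ are uniquely expressible as $3f^0\sigma$, $*_\sigma(f^1\wedge\sigma)$, and $f^3\in\Omega^3_{27}$; the factor $3$ is a harmless normalization. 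This gives existence and uniqueness of $f^0,f^1,f^3$ at once.

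For the metric variation, observe that $\sigma\mapsto g^\sigma$ is a pointwise algebraic map on definite forms, so its differential is, at each point, a $G_2$-equivariant linear map $\Lambda^3\to{\rm Sym}^2(V^*)$. Decomposing the target as ${\rm Sym}^2={\rm \mathbf{R}}\cdot g\oplus{\rm Sym}^2_0$, with highest weights $(0,0)$ and $(2,0)$, Schur's Lemma forces three things: the summand $*_\sigma(f^1\wedge\sigma)$, which carries $(1,0)$, has no image, so $f^1$ cannot appear; the $\Lambda^3_1$-summand maps into $\mathbf{R}\cdot g$; and the $\Lambda^3_{27}$-summand $f^3$ maps into ${\rm Sym}^2_0$ by a constant multiple of $\mathbf{j}_\sigma$. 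Thus $\frac{\partial}{\partial s}g^\sigma=c_1 f^0 g^\sigma+c_2\,\mathbf{j}_\sigma(f^3)$. To pin $c_1=2$ I would use homogeneity: writing $\sigma=u^*\phi$ and $g^\sigma=u^*\langle\,,\rangle$, the coframe scales as $u\mapsto\lambda^{1/3}u$ under $\sigma\mapsto\lambda\sigma$, so $g^\sigma$ is homogeneous of degree $2/3$ in $\sigma$; differentiating at $\lambda=1$ with $\frac{\partial}{\partial s}\lambda=3f^0$ yields $\frac{2}{3}\cdot 3=2$. The constant $c_2=\tfrac12$ I would fix by a single explicit evaluation at the standard $\phi$ on $\mathbf{R}^7$, substituting one convenient element of $\Lambda^3_{27}$ into both sides.

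For the dual $4$-form, the same equivariance gives a pointwise map $\Lambda^3\to\Lambda^4=\Lambda^4_1\oplus\Lambda^4_7\oplus\Lambda^4_{27}$, whose target now realizes all three weights $(0,0),(1,0),(2,0)$; so by Schur each summand of $\frac{\partial}{\partial s}\sigma$ lands in the matching isotypic piece, namely a multiple of $*_\sigma\sigma$, of $f^1\wedge\sigma$, and of $*_\sigma f^3$ respectively. The cleanest route to the constants is to split $\frac{\partial}{\partial s}(*_\sigma\sigma)=*_\sigma\bigl(\frac{\partial}{\partial s}\sigma\bigr)+\bigl(\frac{\partial}{\partial s}*_\sigma\bigr)\sigma$. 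The first term is elementary: using $*_\sigma*_\sigma=1$ on $4$-forms, $*_\sigma\bigl(\frac{\partial}{\partial s}\sigma\bigr)=3f^0*_\sigma\sigma+f^1\wedge\sigma+*_\sigma f^3$, which already produces the coefficient $+1$ on $f^1\wedge\sigma$. Since $\frac{\partial}{\partial s}g^\sigma$ carries no $f^1$, the second term contributes nothing in that direction, confirming the $f^1\wedge\sigma$ term exactly. The homogeneity argument (now $*_\sigma\sigma=u^*(*_\phi\phi)$ is of degree $4/3$ in $\sigma$) fixes the $f^0$ coefficient at $4$, and feeding $\frac{\partial}{\partial s}g^\sigma=\tfrac12\mathbf{j}_\sigma(f^3)$ into the standard variation-of-Hodge-star formula evaluates $\bigl(\frac{\partial}{\partial s}*_\sigma\bigr)\sigma$ on the $f^3$-part, converting $+*_\sigma f^3$ into the required $-*_\sigma f^3$.

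The representation theory delivers the \emph{shape} of all three identities with essentially no computation, and the vanishing of the $f^1$ term in $\frac{\partial}{\partial s}g^\sigma$ together with the $f^0$ coefficients follow cleanly from Schur's Lemma and homogeneity. I expect the main obstacle to be the evaluation of the $f^3$-dependent constants, namely the $\tfrac12$ multiplying $\mathbf{j}_\sigma(f^3)$ and the sign and size of the $*_\sigma f^3$ term: this demands either a careful explicit calculation with a chosen element of $\Lambda^3_{27}$ at the standard point, or the variation-of-Hodge-star identity applied to $\frac{\partial}{\partial s}g^\sigma=\tfrac12\mathbf{j}_\sigma(f^3)$, both of which are bookkeeping-heavy but routine.
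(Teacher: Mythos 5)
Your outline is essentially correct, but note that the paper itself offers no proof of this proposition at all: it simply attributes the result to Joyce \cite{Joyce} and moves on, so there is nothing internal to compare against. What you propose is the standard self-contained derivation, and it is the same style of argument the paper does use elsewhere (e.g.\ the Schur's Lemma argument in the proof of Lemma \ref{DivId}): pointwise $G_2$-equivariance of the algebraic maps $\sigma\mapsto g^\sigma$ and $\sigma\mapsto *_\sigma\sigma$, the isotypic decompositions $\Lambda^3=\Lambda^3_1\oplus\Lambda^3_7\oplus\Lambda^3_{27}$ and ${\rm Sym}^2=\mathbf{R}\cdot g\oplus{\rm Sym}^2_0$, Schur's Lemma to kill the $f^1$-term in $\delta g^\sigma$ and to pin the shape of each identity, and homogeneity of degrees $2/3$ and $4/3$ to fix the $f^0$-coefficients $2$ and $4$. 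All of these steps are sound; in particular your observation that the $\Lambda^4_7$-component of $\frac{\partial}{\partial s}(*_\sigma\sigma)$ comes entirely from $*_\sigma\bigl(\frac{\partial}{\partial s}\sigma\bigr)$, because $\delta g^\sigma$ has no $(1,0)$-component, is exactly the right way to get the coefficient $+1$ on $f^1\wedge\sigma$ for free. What your approach buys over the paper's bare citation is a proof that is visibly consistent with the normalizations of $\mathbf{j}_\sigma$ and of the projections used throughout the rest of the paper, which matters since the constants $3$, $\frac12$, $4$, $-1$ feed directly into the parabolicity computation.

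The one place where your sketch stops short of a proof is the two $f^3$-constants: the $\frac12$ in front of $\mathbf{j}_\sigma(f^3)$ and the $-1$ in front of $*_\sigma f^3$. You correctly identify that Schur's Lemma only gives these up to undetermined scalars and that a single explicit evaluation at the standard $\phi$ on $\mathbf{R}^7$ (or the variation-of-Hodge-star formula applied to $h=\frac12\mathbf{j}_\sigma(f^3)$) is needed; since the second constant is derived from the first in your scheme, an error in the evaluation of $c_2$ would propagate, so that one computation does need to be carried out rather than asserted. With that single calculation supplied, the argument is complete.
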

The proposition will be useful in linearization. If we deform the structure in a closed direction, we have 
\[d(3f^0\sigma+*_\sigma(f^1\wedge\sigma)+f^3)=0.\] 
Using differential identities to expand out we get
\begin{eqnarray}\label{ClosedDeform1}
     0=\frac{4}{7}d^7_1f^1+{\rm l.o.t}\\
 \label{ClosedDeform2} 0=d^1_7f^0+\frac{1}{6}d^7_7f^1+\frac{1}{12}d^{27}_7f^3+{\rm l.o.t}\\
 \label{ClosedDeform3} 0=d^7_{27}f^1+d^{27}_{27}f^3+{\rm l.o.t}.
\end{eqnarray}
\begin{remark}
 The lower order terms are algebraic in $(f^0,f^1,f^3)$, acted on by the various torsion tensors of $\sigma$. It is possible to work them out, but we will not need them in this paper.
\end{remark}

\subsection{The vector field $V(\sigma)$}
Let $\nabla^0$ be a fixed torsion free connection on $M$. Let $\nabla^\sigma$ be the Levi-Civita connection of the Riemannian metric $g^\sigma$ determined by $\sigma$. Then, the difference 
\[T=\nabla^\sigma-\nabla^0\]
is a well-defined tensor on $M$. In fact, since both connections are torsion free, $T$ takes valued in $TM\otimes {\rm Sym}^2T^*M$. Using $g^\sigma$, we identify $TM$ with $T^*M$ and decompose \[TM\otimes {\rm Sym}^2T^*M\simeq TM\oplus TM\otimes {\rm Sym}_0^2T^*M.\]

This gives us two vector fields from $T$: One from $TM$ component and  the other by taking contraction of $TM\otimes {\rm Sym}_0^2T^*M$. To avoid confusing constants, we define $V_1$ and $V_2$ in a slightly differently way. Locally, if $\{e_i\}$ is a frame field and $\{\omega^i\}$ is the dual coframe field and $T=\frac{1}{2}T^i_{jk}e_i\otimes\omega^j\circ \omega^k$ with $T^i_{jk}=T^i_{kj}$, then 
   \[V_1=\frac{1}{7}g^{pq}T^i_{pq}e_i\]
and
   \[V_2=\frac{2}{A}(g^{kj}T^i_{ik}e_j+5V_1)\]
where the repeated indices represent a summation and $A$ is the constant defined in Lemma \ref{DivId}. 

For each pair $(\lambda,\mu)$ of real numbers (that we will determine in the future), define 
 \begin{eqnarray}\label{VectorField}
  V(\sigma)=\lambda V_1+\mu V_2.
 \end{eqnarray}
and correspondingly 
  \begin{eqnarray}
   \mathcal{Q}_{\lambda,\mu}(\sigma)=\lambda\mathcal{L}_{V_1}\sigma+\mu\mathcal{L}_{V_2}\sigma
  \end{eqnarray}
where $\mathcal{L}$ is the Lie derivative. Note that since the vector fields $V_1$ and $V_2$ involves one derivatives of $\sigma$, $\mathcal{Q}_{\lambda,\mu}$ is a second order differential operator on $\sigma$ when $\lambda, \mu$ are not simultaneously 0. 
\subsection{Linearization of $\mathcal{Q}$}
Now we hope to linearize $\mathcal{Q}_{\lambda,\mu}$. By Cartan's formula,
 \[\mathcal{Q}_{\lambda,\mu}\sigma=\lambda d(V_1\lrcorner \sigma)+\lambda V_1\lrcorner d\sigma+\mu d(V_2\lrcorner \sigma)+\mu V_2\lrcorner d\sigma.\]
 If $\sigma$ is closed, 
 \[\mathcal{Q}_{\lambda,\mu}\sigma=d(\mathfrak{q}_{\lambda,\mu}\sigma)\]
with \[\mathfrak{q}_{\lambda,\mu}\sigma=\lambda V_1\lrcorner \sigma+\mu V_2\lrcorner \sigma.\]

Suppose that the variation of $\sigma$ is
  \[\psi=\frac{\partial}{\partial s}|_{s=0}\sigma(s)=3f^0\sigma+*_\sigma(f^1\wedge\sigma)+f^3\] for $f^0\in \Omega^0$, $f^1\in \Omega^1$ and $f^3\in \Omega^3_{27}(M)\subset \Omega^3(M)$. The linearization of $\mathcal{Q}_{\lambda, \mu}$ at $\sigma$ is a second order linear operator $Q_{\lambda,\mu}$ acting on $\psi$ 
\begin{eqnarray}
 Q_{\lambda,\mu}(\psi)=\lambda d(V_{1*}(\psi)\lrcorner \sigma)+\mu d(V_{2*}(\psi)\lrcorner \sigma)+{\rm l.o.t}
\end{eqnarray}
 where $V_{1*}(\psi)=\frac{\partial}{\partial s}V_1(\sigma(s))$ and $V_{2*}(\psi)$ is similarly defined and l.o.t denotes terms of lower order derivatives of $\psi$. Thus the key issue in linearization, as far as the leading term is concerned, is to linearize $V_1$ and $V_2$. 

Consider first the tensor $T$. Since $\nabla^0$ is fixed, the linearization of $T$ is the same as linearizing the connection $\nabla^\sigma$, which, in turn, depends only on the variation $h$ of the underlying metric $g^\sigma$. In fact, as is well known, the Levi-Civita connection is uniquely determined by the torsion free condition and 
\[X(g^\sigma(Y,Z))=g^\sigma(\nabla^\sigma_X Y,Z)+g^\sigma(Y,\nabla^\sigma_XZ).\]
Take the derivative with respect to $t$ to get
\[(\nabla^\sigma_X h)(Y,Z)=g^\sigma(T_*(h)(X,Y),Z)+g^\sigma(Y,T_*(h)(X,Z)).\]
where $T_*(h)(X,Y)=\frac{d}{dt}|_{t=0}\nabla^\sigma_XY$.
Permutate $X,Y,Z$ to get three identities. Add two of them up and subtract from the third to get
\begin{eqnarray}
\begin{array}{l} 
g^\sigma\left(T_*(h)(X,Y),Z\right)\\\\
 =\frac{1}{2}\left[(\nabla^\sigma_Xh)(Y,Z)+(\nabla^\sigma_Yh)(X,Z)-(\nabla^\sigma_Zh)(X,Y)\right].\end{array}
\end{eqnarray}
 Written locally, the variation of $T$ is given by 
\begin{eqnarray}
 (T_*(h))^i_{jk}=\frac{1}{2}g^{il}(h_{jk,l}+h_{kl,j}-h_{lj,k})
\end{eqnarray}
where $h_{ij,k}=(\nabla^\sigma_{e_k}h)_{ij}$.
Thus 
\begin{eqnarray}\label{LinV1}
 V_{1*}(h)=\frac{1}{2}g^{il}g^{jk}h_{jk,l}e_i+{\rm l.o.t}
\end{eqnarray}
and 
\begin{eqnarray}\label{LinV2}
\frac{A}{2}\cdot V_{2*}(h)=g^{ik}g^{pq}h_{kp,q}e_i+(-\frac{1}{2}+\frac{5}{14})g^{ik}g^{pq}h_{pq,k}e_i+{\rm l.o.t}
\end{eqnarray}
where again, {\rm l.o.t} denotes lower order derivatives of $h$. 

Of course, these computations are not new and actually valid for all dimensions. Let us now specify to the $G_2$ case. By Joyce's result, the metric $g$ varies infinitesimally by
\begin{eqnarray}
 h=2f^0g+\frac{1}{2}\mathbf{j}_\sigma(f^3).
\end{eqnarray}
Then the variation of vector fields $V_1$ and $V_2$ are given respectively by
\begin{eqnarray}
 V_{1*}(\psi)=g^{ij}f^0_{,j}e_i+{\rm l.o.t}=(d^1_7f^0)_\#+{\rm l.o.t}.
\end{eqnarray}
and 
\begin{eqnarray}
\begin{array}{ll}
 V_{2*}(\psi)&=\frac{1}{A}(\mathbf{j}_\sigma (f^3))_{ij,k}g^{ik}g^{jl}e_l+{\rm l.o.t}\\\\
             &=(d^{27}_7f^3)_\#+{\rm l.o.t}.
\end{array}
\end{eqnarray}
The last equality is due to Lemma \ref{DivId} and \textsection \ref{TorsionId}.
In conclusion we get the leading term of the linearized operator $Q$
\begin{proposition}
The linearized operator $\mathfrak{q}_{\lambda,\mu}$ is 
\begin{eqnarray}
q_{\lambda,\mu}(\psi)=\lambda (d^1_7f^0)_\#\lrcorner \sigma+\mu (d^{27}_7f^3)_\#\lrcorner \sigma +{\rm l.o.t}.
\end{eqnarray}
 The linearized operator $Q_{\lambda,\mu}$ is 
 \begin{eqnarray}
  Q_{\lambda,\mu}(\psi)=d( q_{\lambda,\mu}(\psi))+{\rm l.o.t}
 \end{eqnarray}
for $\psi=3f^0\sigma+*_\sigma(f^1\wedge\sigma)+f^3$.
\end{proposition}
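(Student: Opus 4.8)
The plan is to obtain both formulas by differentiating the already-simplified expressions for $\mathfrak{q}_{\lambda,\mu}$ and $\mathcal{Q}_{\lambda,\mu}$ in the deformation parameter $s$ and then discarding every term that carries fewer than the maximal number of derivatives of the variation $\psi$. By the remark in \textsection\ref{TorsionId} this suffices, since only the principal symbols matter for the later parabolicity argument, so I may work as if the structure were torsion free and keep track of derivative count alone.

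First I would treat $\mathfrak{q}_{\lambda,\mu}$. Since the base $G_2$-structure is closed, $\mathfrak{q}_{\lambda,\mu}\sigma=\lambda\,V_1\lrcorner\sigma+\mu\,V_2\lrcorner\sigma$, so differentiating along a path $\sigma(s)$ with $\dot\sigma(0)=\psi$ and applying the Leibniz rule to each contraction gives
\begin{equation*}
q_{\lambda,\mu}(\psi)=\lambda\bigl(V_{1*}(\psi)\lrcorner\sigma+V_1\lrcorner\psi\bigr)+\mu\bigl(V_{2*}(\psi)\lrcorner\sigma+V_2\lrcorner\psi\bigr).
\end{equation*}
The interior product is algebraic, so $V_1\lrcorner\psi$ and $V_2\lrcorner\psi$ involve \emph{no} derivatives of $\psi$ (the derivatives sit on the background fields hidden inside $V_1,V_2$); hence these two terms are lower order and get absorbed into l.o.t. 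The leading part is $\lambda V_{1*}(\psi)\lrcorner\sigma+\mu V_{2*}(\psi)\lrcorner\sigma$, and substituting the identifications $V_{1*}(\psi)=(d^1_7f^0)_\#+\text{l.o.t}$ and $V_{2*}(\psi)=(d^{27}_7f^3)_\#+\text{l.o.t}$ established just above (the second using Lemma \ref{DivId} and the torsion-free reduction) yields the first formula.

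For $Q_{\lambda,\mu}$ I would start from the Cartan expansion $\mathcal{Q}_{\lambda,\mu}\sigma=\lambda d(V_1\lrcorner\sigma)+\lambda V_1\lrcorner d\sigma+\mu d(V_2\lrcorner\sigma)+\mu V_2\lrcorner d\sigma$ and linearize at the closed base point. Because $d\sigma=0$ there, the variation of $V_i\lrcorner d\sigma$ reduces to $V_i\lrcorner d\psi$, which carries a single derivative of $\psi$ and is thus strictly lower order than the two-derivative principal term produced by $d(V_{i*}(\psi)\lrcorner\sigma)$, while the companion term $V_{i*}(\psi)\lrcorner d\sigma$ vanishes outright. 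Since the exterior derivative is a fixed linear operator it commutes with $\tfrac{d}{ds}$, so the principal part of $Q_{\lambda,\mu}(\psi)$ is $d\bigl(\lambda V_{1*}(\psi)\lrcorner\sigma+\mu V_{2*}(\psi)\lrcorner\sigma\bigr)=d(q_{\lambda,\mu}(\psi))$ modulo l.o.t., which is the second formula.

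The main obstacle will be purely the order bookkeeping: I must confirm that contracting with the undifferentiated background $\sigma$, together with the terms $V_i\lrcorner\psi$ and $V_i\lrcorner d\psi$, genuinely lowers the derivative count on $\psi$, so that all of these contributions are invisible at the level of principal symbols and legitimately fall into l.o.t. Granting the linearizations $V_{1*},V_{2*}$ recorded above, no further geometric input is needed, and the Proposition follows by assembling these pieces.
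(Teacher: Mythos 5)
Your proposal is correct and follows essentially the same route as the paper: the paper's own derivation is exactly this Leibniz-rule bookkeeping, with the substantive inputs $V_{1*}(\psi)=(d^1_7f^0)_\#+{\rm l.o.t.}$ and $V_{2*}(\psi)=(d^{27}_7f^3)_\#+{\rm l.o.t.}$ imported from the immediately preceding computation (linearized Christoffel symbols, Joyce's formula for $\delta g^\sigma$, and Lemma \ref{DivId}), just as you do. Your explicit justification that $V_i\lrcorner\psi$, $V_i\lrcorner d\psi$, and $V_{i*}(\psi)\lrcorner d\sigma$ are lower order or vanish is a correct and slightly more careful account of why only $d(V_{i*}(\psi)\lrcorner\sigma)$ survives at the principal-symbol level.
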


Now assume further $d\psi=0$, i.e., we are deforming in the direction of closed forms. Using (\ref{ClosedDeform2}), we may rewrite $q$ as 
\begin{eqnarray}\label{Linearq}
      q_{\lambda,\mu}(\psi)=(\lambda-12\mu)*(d^1_7f^0\wedge *\sigma)-2\mu *(d^7_7f^1\wedge*\sigma)+{\rm l.o.t}.
 \end{eqnarray}
\subsection{Linearization of $\mathcal{P}(\sigma)=\Delta_\sigma\sigma$}
Remember the Hodge Laplacian acting on 3-forms is given by
  \[\Delta=*d*d-d*d*.\]
The linearization of $\Delta_\sigma\sigma$ at $\sigma$ is a second order operator on the variation $\psi$. We denote this by $P(\psi)$. For our purpose, we only compute the leading part of $P$: By Joyce's result,
 \[\begin{array}{lcl}
    P(\psi)&=&*d*d(3f^0\sigma+*_\sigma(f^1\wedge\sigma)+f^3)\\\\
           &&-d*d(4f^0*\sigma+f^1\wedge\sigma-*f^3)+{\rm l.o.t}\\\\
           &=& (*d*d-d*d*)f^3\\\\
         &&+(3*d*d+4d*d*)(f^0\sigma)\\\\
          &&+(*d*d+d*d*)*(f^1\wedge\sigma)\\\\
          &&+{\rm l.o.t}.
   \end{array}
\]
It is clear from this that $\mathcal{P}$ is not elliptic. 

Now assume further $d\psi=0$ and $d\sigma=0$. Then we may write 

 \begin{eqnarray*}
  P(\psi)=dp(\psi)
\end{eqnarray*}
with
\begin{eqnarray}
p(\psi)=-*d(4f^0*\sigma+f^1\wedge\sigma-*f^3)+{\rm l.o.t.}
\end{eqnarray}
 \subsection{Parabolicity along closed forms}
 Now we choose parameters $\lambda=-5,\mu=-1$. We use the formula (\cite{BryantG2}, pp.25) 
 \[df^1=\frac{1}{3}*(d^7_7f^1\wedge*\sigma)+d^7_{14}f^1\]
and \[d(f^1\wedge\sigma)=\frac{2}{3}d^7_7f^1\wedge*\sigma-*d^7_{14}f^1\]
 to compute that 
 \begin{eqnarray*}p(\psi)+q(\psi)&=&3*(df^0\wedge*\sigma)+*(d^7_7f^1\wedge*\sigma)+*d*f^3+{\rm l.o.t.}\\
    &=&3*(df^0\wedge*\sigma)+*d*f^3+\frac{4}{3}*(d^7_7f^1\wedge*\sigma)+d^7_{14}f^1+{\rm l.o.t.}
    \end{eqnarray*}  
 
 On the other hand 
 \begin{eqnarray*}
 *d*\psi&=&*d*(3f^0\sigma+*(f^1\wedge\sigma)+f^3)\\
             &=&3*(df^0\wedge*\sigma)+\frac{2}{3}*(d^7_7f^1\wedge*\sigma)-d^7_{14}f^1+*d*f^3+{\rm l.o.t}.
\end{eqnarray*}
 Hence
\begin{eqnarray*}
p(\psi)+q(\psi)-*d*\psi=\frac{2}{3}*(d^7_7f^1\wedge*\sigma)+2d^7_{14}f^1+{\rm l.o.t.}=df^1+{\rm l.o.t.}
\end{eqnarray*}
Now take the exterior differential of this identity and use the fact 
\[-\Delta_\sigma\psi=d*d*\psi\] when $d\psi=0$. We get:
\begin{lemma}\label{Parabolicity}
If $d\psi=0$, then $P(\psi)+Q_{-5,-1}(\psi)=-\Delta_\sigma\psi+d \Phi(\psi)$ where $\Phi(\psi)$ is an algebraic linear operator on $\psi$ with coefficients depending on the torsion of $\sigma$ in a universal way.   
\end{lemma}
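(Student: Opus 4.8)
The plan is to use that we are linearizing along \emph{closed} variations $\psi$ of a \emph{closed} structure $\sigma$, which forces both $P$ and $Q_{-5,-1}$ to be exterior derivatives of first-order operators, and then to match the resulting interior forms against $*d*\psi$ by means of the differential identities of \cite{BryantG2}.

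First I would record that, since a closed $\sigma$ stays closed under a closed deformation $\sigma(s)$, the formula $\Delta=*d*d-d*d*$ gives $\Delta_{\sigma(s)}\sigma(s)=*d*d\,\sigma(s)-d*d*\sigma(s)=-d*d*\sigma(s)$, the first term vanishing because $d\sigma(s)=0$; thus $\mathcal{P}(\sigma(s))=d(-*d*\sigma(s))$, and by Cartan's formula $\mathcal{Q}_{\lambda,\mu}(\sigma(s))=d(\mathfrak{q}_{\lambda,\mu}\sigma(s))$. Differentiating in $s$ and commuting $d$ past the $s$-derivative yields $P(\psi)=d\,p(\psi)$ and $Q_{-5,-1}(\psi)=d\big(q_{-5,-1}(\psi)+\text{l.o.t.}\big)$, where every ``l.o.t.'' is now \emph{algebraic} in $\psi$ (carrying no derivatives of $\psi$), with coefficients built from the torsion $\tau_2$ and from $\nabla^0$. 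Hence $P(\psi)+Q_{-5,-1}(\psi)=d\big(p(\psi)+q_{-5,-1}(\psi)+\text{l.o.t.}\big)$, and the whole statement reduces to understanding the first-order form $p(\psi)+q_{-5,-1}(\psi)$ modulo algebraic terms.

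Next I would compare $p(\psi)+q_{-5,-1}(\psi)$ with $*d*\psi$. Substituting $\lambda=-5$, $\mu=-1$ into (\ref{Linearq}) produces $q_{-5,-1}(\psi)=7*(d^1_7f^0\wedge*\sigma)+2*(d^7_7f^1\wedge*\sigma)+\text{l.o.t.}$, while expanding $p(\psi)=-*d(4f^0*\sigma+f^1\wedge\sigma-*f^3)+\text{l.o.t.}$ with the identities for $df^1$ and $d(f^1\wedge\sigma)$ from \cite{BryantG2} (treating each occurrence of $d*\sigma=\tau_2\wedge\sigma$ as an algebraic torsion term, per \textsection\ref{TorsionId}) contributes $-4*(df^0\wedge*\sigma)$ to the $f^0$-part together with the matching $f^1$- and $f^3$-parts. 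The point of the particular choice $(\lambda,\mu)=(-5,-1)$ is that it makes the $f^0$- and $f^3$-contributions of $p+q$ coincide exactly with those of $*d*\psi$, so that the difference collapses to a single $f^1$-piece $p(\psi)+q_{-5,-1}(\psi)-*d*\psi=c\,df^1+\text{l.o.t.}$ for a constant $c$, after reassembling $\tfrac13*(d^7_7f^1\wedge*\sigma)+d^7_{14}f^1$ into $df^1$.

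Finally I would apply $d$ to this identity. Since $d(c\,df^1)=0$, the only term carrying a derivative of $\psi$ outside the Laplacian drops out, leaving $P(\psi)+Q_{-5,-1}(\psi)=d(*d*\psi)+d\,\Phi(\psi)$, with $\Phi$ the accumulated algebraic operator whose coefficients depend on the torsion. Using $d\psi=0$ once more gives $*d*d\psi=0$, hence $d*d*\psi=-\Delta_\sigma\psi$, and the claim follows. I expect the main obstacle to be exactly the middle step: one must carry the torsion-corrected Bianchi-type identities of \cite{BryantG2} precisely enough to be sure that, apart from the single multiple of $df^1$, \emph{every} remaining term is either purely algebraic in $\psi$ or has already been matched against $*d*\psi$---so that after applying $d$ nothing of first order in $\psi$ survives except $-\Delta_\sigma\psi$ itself.
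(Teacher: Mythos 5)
Your proposal is correct and follows essentially the same route as the paper: write $P(\psi)+Q_{-5,-1}(\psi)=d(p(\psi)+q_{-5,-1}(\psi)+\text{l.o.t.})$, use the identities for $df^1$ and $d(f^1\wedge\sigma)$ to show that $p+q-*d*\psi$ is a constant multiple of $df^1$ plus algebraic terms, and then apply $d$ and the relation $d*d*\psi=-\Delta_\sigma\psi$ for closed $\psi$. Your bookkeeping of where the lower-order (algebraic, torsion-dependent) terms sit relative to the outer $d$ is in fact slightly more careful than the paper's, but the argument is the same.
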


\section{Proof of Theorem \ref{Thm} assuming Lemma \ref{Lemma}}\label{Proof}
 The proof is modeled on Ricci flow except that, for the uniqueness, we will avoid the notion of harmonic map flow. 
\subsection{Short time existence}
For the first step, we describe a clever device used in both \cite{Hamilton} and \cite{Hamilton1}.  Let $\sigma_0$ be an initial closed $G_2$-structure. We solve formally for the derivative $\frac{d^k}{dt^k}\theta(0)$ by differentiating through the equation (\ref{Flowtheta}). Let $\hat{\theta}(t)$ be a family of 3-forms whose Taylor series expansion at $t=0$ is given by $\frac{d^k}{dt^k}\theta(0)$ (the existence is a non-trivial real analysis exercise) . Let 
\[\chi(t)=\frac{d}{dt}\hat{\theta}-\Delta_{\hat{\theta}+\sigma_0}(\hat{\theta}+\sigma_0)-\mathcal{L}_{V(\hat{\theta}+\sigma_0)}(\hat{\theta}+\sigma_0)\]
where $V$ is defined in (\ref{VectorField}). 
Then $\chi(t)$ has a zero Taylor series expansion at $t=0$. By translating $\chi$ in the $t$ direction, we obtain a 3-form $\overline{\chi}(t)$ as close to $\chi$ as possible that vanishes in $[0,\epsilon']$ for some $\epsilon'>0$. By applying Lemma (\ref{Lemma}) to the pair $(\overline{\chi},0)$, we get a solution $\overline{\theta}(t)$ to (\ref{ModFlowtheta}) for a possibly shorter time period $[0,\epsilon]$.  As before, denote $\overline{\sigma}(t)=\theta(t)+\sigma_0$.

Now let the $\phi_t$ be time-dependent diffeomorphisms with vector fields $-V(\overline{\sigma})$. In other word, $\phi_t$ is the solution to the ODE
 \begin{eqnarray*}
\left\{\begin{array}{l}
\frac{d}{dt}\phi_t(x)=-V(\overline{\sigma}(t))|_{\phi(x)}\\\\
  \phi_0=Id\end{array}\right..
 \end{eqnarray*}

Let 
  \[\sigma(t)=\phi_t^*\overline{\sigma}(t).\]
Then \[\sigma(0)=\sigma_0\] since $\phi_0=Id$.
Also,
     \[d\sigma(t)=\phi_t^*d(\overline{\sigma}(t))=0\] since $\overline{\sigma}(t)$ is closed.
  Moreover,
     \begin{eqnarray*}
\begin{array}{ll}
     \frac{d}{dt}\sigma&=\phi_t^*(\mathcal{L}_{-V}\overline{\sigma})+\phi_t^*(\Delta_{\overline{\sigma}}\overline{\sigma})+\phi_t^*(\mathcal{L}_V\overline{\sigma})\\\\
                      &= \Delta_{\phi_t^*\overline{\sigma}}\phi^*_t\overline{\sigma}\\\\
                      &=\Delta_\sigma\sigma.
\end{array}
     \end{eqnarray*}
Thus $\sigma(t)$ solves the flow equation (\ref{ClsdLapFlow}). Theorem \ref{Thm} follows.

\subsection{Uniqueness}
 Given a family of $G_2$-structures $\sigma_t$, consider the following nonlinear evolution equation for diffeomorphisms
 \begin{eqnarray}\label{DiffFlow}\frac{d}{dt}\phi=-V((\phi^{-1})^*\sigma)\end{eqnarray}
where $V=\lambda V_1+\mu V_2$ with $\lambda=-5,\mu=-1$ as usual. We claim:
\begin{proposition}
 The flow (\ref{DiffFlow}) is strictly parabolic in $\phi$.
\end{proposition}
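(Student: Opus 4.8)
The plan is to compute the principal symbol of the right-hand side of (\ref{DiffFlow}), viewed as a second-order differential operator in the diffeomorphism $\phi$, and to show that this symbol is (up to sign) positive-definite, which is the definition of strict parabolicity. The key observation is that the map $\phi \mapsto (\phi^{-1})^*\sigma$ is, to leading order in $\phi$, the Lie derivative: if we write $\phi_t = \exp(t W) + \cdots$ for a vector field $W$ representing the infinitesimal variation of $\phi$, then linearizing (\ref{DiffFlow}) at a fixed $\phi$ reduces the problem to understanding how $V$ responds to the variation $\mathcal{L}_W \sigma$ of the underlying $G_2$-structure. Thus the entire computation is bootstrapped off the linearization machinery already developed for $V_1$ and $V_2$ in the previous section.

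\medskip
\noindent First I would linearize the right-hand side. Setting $\sigma(\phi) = (\phi^{-1})^*\sigma$ and perturbing $\phi$ by a vector field $W$, the first-order variation of $\sigma(\phi)$ is $-\mathcal{L}_W\sigma$, so that $\psi := -\mathcal{L}_W\sigma$ plays the role of the deformation $\psi = 3f^0\sigma + *_\sigma(f^1\wedge\sigma)+f^3$ from Joyce's decomposition. Since the vector fields $V_1, V_2$ each involve one derivative of $\sigma$, and $\psi$ already carries one derivative of $W$, the operator $W \mapsto V_{i*}(\psi)$ is second order in $W$; this is precisely where parabolicity can hold. The leading symbol is extracted by replacing each covariant derivative $\nabla_a$ by $i\xi_a$ for a cotangent vector $\xi$, and by retaining only the highest-order terms of both $\mathcal{L}_W\sigma$ (namely $\nabla W$) and of $V_{1*}, V_{2*}$ (the formulas (\ref{LinV1}) and (\ref{LinV2})).

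\medskip
\noindent The heart of the matter is then to show that the resulting symbol $\phi \mapsto \sigma_\xi$, acting on the vector field $W$, is a negative-definite endomorphism of $T_x M$ for every nonzero $\xi$, so that (\ref{DiffFlow}) runs in the correct (parabolic) direction; here the freedom in the choice $\lambda = -5, \mu = -1$ is what should pin the sign down. Concretely, I expect the symbol to take the schematic form $W \mapsto c_1 |\xi|^2 W + c_2\, \xi\,(\xi \cdot W) + (\text{lower-weight } G_2\text{-equivariant terms})$, and the task is to verify that the constants force both the trace part and the traceless part to have the same definite sign. The cleanest way to organize this is to decompose $W$ (equivalently $\mathcal{L}_W\sigma$, and hence $(f^0,f^1,f^3)$) into $G_2$-irreducible pieces and compute the symbol's eigenvalue on each; the identities relating $d^1_7 f^0$ and $d^{27}_7 f^3$ to $V_{1*},V_{2*}$ from the preceding subsection reduce this to bookkeeping of universal constants. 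Throughout, by the remark in \textsection\ref{TorsionId}, the torsion contributes only lower-order terms and may be ignored for the symbol computation.

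\medskip
\noindent \textbf{The main obstacle} will be the sign and nondegeneracy of the traceless ($\Lambda^2_{14}$ or $f^3$) part of the symbol: the trace/divergence piece coming from $V_1$ is essentially the symbol of $\nabla^*\nabla$ on vector fields and is manifestly elliptic, but the piece coming from $V_2$ involves the less transparent map $\mathbf{j}_\sigma$ and the constant $A$ of Lemma \ref{DivId}, and one must check that the combination $\lambda V_1 + \mu V_2$ with $(\lambda,\mu)=(-5,-1)$ does not leave a null direction in $T_xM$ for any $\xi$. I would confront this by exhibiting the symbol explicitly as a sum of two manifestly sign-definite quadratic forms in $W$ — the pure Laplace-type term and a $(\xi\cdot W)^2$ term — and confirming that no cancellation occurs, which is consistent with Lemma \ref{Parabolicity}, where the same choice of parameters rendered $P + Q_{-5,-1}$ equal to $-\Delta_\sigma$ plus lower order. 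Indeed, the parabolicity of (\ref{DiffFlow}) is the natural companion statement to Lemma \ref{Parabolicity}, and I expect the two symbol computations to mirror one another.
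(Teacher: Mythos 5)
Your plan is essentially the paper's proof: the paper linearizes (\ref{DiffFlow}) exactly as you describe, writing the variation of $(\phi^{-1})^*\sigma$ as $-\mathcal{L}_X\overline{\sigma}=-d(X\lrcorner\overline{\sigma})+{\rm l.o.t.}$ (the torsion term being zeroth order in $X$) and feeding this into the formulas for $V_{1*}$ and $V_{2*}$. The one cosmetic difference is how the definiteness check you flag as the main obstacle gets closed out: instead of a raw symbol computation with undetermined constants, the paper uses the torsion-free differential identities of \cite{BryantG2} to recognize $-5\left(-\tfrac{1}{7}d^1_7d^7_1X^\flat\right)-d^{27}_7d^7_{27}X^\flat$ as $-\Delta_{\overline{\sigma}}X^\flat+{\rm l.o.t.}$, which gives the strictly parabolic principal part in one stroke.
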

\begin{proof}
 We linearize (\ref{DiffFlow}). Suppose that \[\frac{\partial}{\partial s}|_{s=0}\phi=X.\] Then it may be computed that 
  \[\frac{\partial}{\partial s}|_{s=0}\phi^{-1}=-(\phi^{-1})_*X,\]
 and that 
\[\begin{array}{lcl}\frac{\partial}{\partial s}|_{s=0}(\phi^{-1})^*\sigma&=&(\phi^{-1})^{*}\mathcal{L}_{-(\phi^{-1})_*X}\sigma\\\\
                            &=&-\mathcal{L}_X(\phi^{-1})^*\sigma. 
  \end{array}
 \]
Denote $\overline{\sigma}=(\phi^{-1})^*\sigma$, which is also a $G_2$-structure. We use (\ref{LinV1})  and (\ref{LinV2}) to compute the linearization of the right hand side of (\ref{DiffFlow}):
\[\begin{array}{lcl}
   -\frac{\partial}{\partial s}|_{s=0}V((\phi^{-1})^*\sigma)&=&\lambda (V_1)_*(\mathcal{L}_X\overline{\sigma})+\mu (V_2)_*(\mathcal{L}_X\overline{\sigma})\\\\
                           &=& \lambda (V_1)_*(d(X\lrcorner \overline{\sigma}))+\mu (V_2)_*(d(X\lrcorner \overline{\sigma}))+{\rm l.o.t}\\\\
                           &=&-5 (-\frac{1}{7}d^1_7d^7_1X^\flat)-1(d^{27}_7d^7_{27}X^\flat)+{\rm l.o.t.}\\\\
                           &=&-(d^7_7d^7_7+d^1_7d^7_1)X^\flat+{\rm l.o.t.}\\\\
                           &=&-\Delta_{\overline{\sigma}}X^\flat+{\rm l.o.t.}
  \end{array}
\]
where all operators are defined using $\overline{\sigma}$ and ${\rm l.o.t.}$ denotes terms involving $X$ derivatives of degree less than 2.\end{proof}

Suppose now $\sigma(t)$ solves (\ref{LapFlow}). Let $\phi(t)$ be the diffeomorphisms obtained from solving (\ref{DiffFlow}) with the initial condition $\phi(0)=Id$. Then the new family $\overline{\sigma}=(\phi(t)^{-1})^*\sigma(t)$ satisfies  
\[\begin{array}{lcl}
   \frac{d}{dt}\overline{\sigma}&=&(\phi^{-1})^{*}\mathcal{L}_{((\phi^{-1})_*V(\overline{\sigma}))}\sigma+(\phi^{-1})^*\Delta_\sigma\sigma\\\\
                                &=&\mathcal{L}_{V(\overline{\sigma})}\overline{\sigma}+\Delta_{\overline{\sigma}}\overline{\sigma},
  \end{array}
\]
i.e., the flow equation (\ref{ModFlow}).

Now we are ready to prove uniqueness. Suppose $\sigma_a (a=1,2)$ are two families of closed $G_2$ structures solving (\ref{LapFlow}). Let $\phi_a$ be the corresponding families of diffeomorphisms solving (\ref{DiffFlow}) with the initial condition $\phi_a(0)=Id$. Then we have shown that \[\overline{\sigma}_a=(\phi^{-1})^*\sigma_a\] both solve (\ref{ModFlow}) and have the same initial condition. It is clear that $\overline{\sigma}_a$ are closed. Hence, by uniqueness part of Lemma \ref{Lemma}, \[\overline{\sigma}_1=\overline{\sigma}_2:=\overline{\sigma}.\]
But then $\phi_a$ solve the same ODE 
 \[\frac{d}{dt}\phi=V(\overline{\sigma})\] with the initial condition $\phi(0)=Id$. Therefore they must be the same: $\phi_1=\phi_2$. Hence $\sigma_1=\sigma_2$. 
\section{Proof of Lemma \ref{Lemma}}\label{ProofLemma}
Throughout this section we assume that $M^7$ is compact. 

We will use Nash Moser Inverse Function theorem. Our application of this theorem is rather formal, i.e., it does not require any hard estimates. We state the theorem as follows (\cite{Hamilton1}, pp. 171-172):
\begin{theorem}[Nash Moser Theorem]
 Let $\mathcal{F}$ and $\mathcal{G}$ be tame Fr\'echet spaces and $F:\mathcal{U}\subset \mathcal{F}\rightarrow \mathcal{G}$ a smooth tame map. Suppose that the equation for the derivative $DF(f)h=k$ has a unique solution $f=VF(f)k$ for all $f$ in $\mathcal{U}$ and all $k$ and that the family of inverses $VF:\mathcal{U}\times \mathcal{G}\rightarrow \mathcal{F}$ is a smooth tame map. Then $F$ is locally invertible and each local inverse $F^{-1}$ is a smooth tame map.
\end{theorem}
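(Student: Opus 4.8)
The plan is to prove the statement by Hamilton's Newton--Nash--Moser iteration; no feature of the $G_2$ setting is needed, only the abstract tame structure. Since the conclusion is local, I would first fix $f_0\in\mathcal U$ with $g_0=F(f_0)$ and show that every $g$ in a suitable $\mathcal G$-neighborhood of $g_0$ has a preimage $f$ near $f_0$ that depends smoothly and tamely on $g$. Throughout I would exploit the three structural features carried by a tame Fr\'echet space: a fixed grading $\|\cdot\|_n$ obeying the logarithmic-convexity (interpolation) inequalities $\|f\|_n\le C\|f\|_{n_0}^{1-\theta}\|f\|_{n_1}^{\theta}$ for $n=(1-\theta)n_0+\theta n_1$; a family of smoothing operators $S(t)\colon\mathcal F\to\mathcal F$ ($t\ge 1$) with $\|S(t)f\|_n\le C\,t^{\,n-m}\|f\|_m$ for $n\ge m$ and $\|f-S(t)f\|_n\le C\,t^{\,n-m}\|f\|_m$ for $n\le m$; and the tame estimates encoding smooth tameness of $F$ and of the inverse family $VF$. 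In particular the hypotheses give, on a neighborhood of $f_0$, the inverse bound $\|VF(f)k\|_n\le C(\|k\|_{n+r}+\|f\|_{n+r}\|k\|_r)$, and the smoothness of $F$ gives a quadratic Taylor estimate $\|F(f+h)-F(f)-DF(f)h\|_n\le C\big(\|h\|_{n+r}\|h\|_{2r}+\|f\|_{n+r}\|h\|_{2r}^{2}\big)$.

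Second, I would set up the modified Newton scheme. The naive update $f_{j+1}=f_j+VF(f_j)(g-F(f_j))$ fails in the Fr\'echet category because $VF$ loses $r$ derivatives at each step, so the iterates cannot be confined to a fixed space. To repair this I would insert the smoothing operators and define, for a geometrically growing sequence of parameters $t_j=t_0^{\kappa^{\,j}}$ with some $\kappa\in(1,2)$, the iteration $f_{j+1}=f_j+VF(f_j)\,S(t_j)\,\big(g-F(f_j)\big)$, started at $f_0$. Writing $h_j=f_{j+1}-f_j$ and $e_j=g-F(f_j)$, the smoothing gains the derivatives that $VF$ destroys, while the quadratic Taylor estimate makes the residual $e_{j+1}$ small to second order in $h_j$.

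Third --- and this is where the real work lies --- I would prove by induction a family of estimates in a moving scale of norms that simultaneously controls $\|h_j\|_n$ on a low range of $n$ while tolerating controlled growth of a single high norm $\|f_j\|_{N}$. Concretely, fixing a base index $b$ and a top index $N$, one shows inductively that $\|h_j\|_b\le \delta\,t_j^{-\alpha}$ and $\|f_j\|_N\le K$ for suitable $\alpha>0$, $K$, and small $\delta$. The inductive step combines (i) the tame bound on $VF$ applied to $S(t_j)e_j$, (ii) the two smoothing inequalities to pass between the low and high norms, (iii) interpolation to redistribute regularity, and (iv) the quadratic estimate to bound $e_{j+1}$ by (roughly) $\|h_j\|^2$ plus the smoothing error $\|(I-S(t_j))e_j\|$. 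The arithmetic of the exponents --- matching the superlinear decay $t_j^{-\alpha}$ from the quadratic term against the polynomial losses from $VF$ and from smoothing --- is what forces the admissible range of $\kappa$ and is the delicate heart of the argument and its main obstacle. Since $\sum_j t_j^{-\alpha}<\infty$, the $f_j$ are Cauchy in $\|\cdot\|_b$; the uniform bound on $\|f_j\|_N$ together with interpolation upgrades this to convergence in every seminorm $\|\cdot\|_n$ with $n<N$, and letting $N\to\infty$ yields convergence $f_j\to f$ in the full Fr\'echet topology with $F(f)=g$ by continuity.

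Finally I would address uniqueness and smooth tameness of the inverse. Local uniqueness of $f$ follows from the hypothesis that $DF(f)h=k$ has a \emph{unique} solution, i.e.\ $DF(f)$ is injective: if $F(f)=F(f')=g$ with both near $f_0$, the quadratic Taylor estimate applied to $f'=f+(f'-f)$ shows that $DF(f)(f'-f)$ is of second order in $f'-f$, which forces $f'=f$ on a small enough neighborhood. For smoothness, differentiating the identity $F(F^{-1}(g))=g$ gives $D(F^{-1})(g)=VF(F^{-1}(g))$; as the right-hand side is the composition of the smooth tame map $VF$ with $F^{-1}$, a bootstrap on the order of differentiation shows $F^{-1}$ is $C^\infty$, and the estimates required for \emph{smooth tame} are precisely the uniform-in-$g$ versions of the iteration bounds above, carried to all orders by differentiating the Newton scheme. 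The recurring difficulty in every step remains the loss of derivatives: each estimate must be tracked in a shifting scale of norms so that the smoothing operators can restore what $VF$ removes, and convergence ultimately hinges on the exact balance of exponents in the inductive estimate.
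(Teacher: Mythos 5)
Your statement is one the paper does not prove at all: it is quoted verbatim as a known result from Hamilton's survey \cite{Hamilton1} (pp.~171--172), and the authors' use of it is, in their own words, ``rather formal.'' So the only meaningful benchmark is Hamilton's proof in that reference, and measured against it your sketch follows essentially the same route: the modified Newton iteration $f_{j+1}=f_j+VF(f_j)S(t_j)\bigl(g-F(f_j)\bigr)$ with smoothing operators inserted to offset the loss of derivatives, interpolation to close the inductive estimates, and quadratic decay of the residual to drive convergence. This is precisely the classical Nash--Moser scheme carried out in Part~III of \cite{Hamilton1}, and your outline correctly isolates where the real work lies (the exponent bookkeeping in the inductive step).

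Three caveats, none fatal but all substantive. First, you postulate smoothing operators and logarithmic-convexity inequalities as ``structural features carried by a tame Fr\'echet space''; in fact their existence is exactly where the tameness hypothesis earns its keep --- Hamilton obtains them by realizing a tame space as a direct summand of a space $\Sigma(B)$ of exponentially decreasing sequences in a Banach space, and this reduction is a nontrivial portion of his proof, not a given. Second, your injectivity argument is too quick as stated: from $f'-f=VF(f)\bigl[-(F(f')-F(f)-DF(f)(f'-f))\bigr]$ the quadratic estimate yields something like $\|f'-f\|_{b}\le C\,\|f'-f\|_{b+r'}\,\|f'-f\|_{2r}$, where the norm on the right is \emph{higher} than the one controlled on the left; one must take the neighborhood small in a sufficiently high norm so that $\|f'-f\|_{b+r'}$ is small (not merely bounded) and $b\ge 2r$, before the absorption argument closes. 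Third, differentiating $F(F^{-1}(g))=g$ to get $D(F^{-1})(g)=VF(F^{-1}(g))$ presupposes that $F^{-1}$ is already known to be differentiable; in Hamilton's treatment the differentiability and the tame estimates for $F^{-1}$ come out of the same iteration bounds, as you indicate at the end, so this is an ordering issue in the write-up rather than a gap in the method.
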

 
A tame Fr\'echet space $\mathcal{F}$ is endowed with a countable family of increasing norms $\{\parallel \parallel_n\}_{n=1}^\infty$ defining the topology (such a family is called a grading). Thus a sequence $\{x_k\}$ converges if and only if it converges with respect to each norm. A subset $\mathcal{U}\subset \mathcal{F}$ is open if and only if, for each $x\in \mathcal{U}$, a small $\parallel \parallel_n$ ball around $x$ is contained in $\mathcal{U}$ for some $n$. Two such gradings $\parallel\parallel_n$ and $\parallel\parallel'_n$ are said to be tamely equivalent with degree $r$ and base $b$ if $\parallel \parallel_n\leq C(n) \parallel \parallel'_{n+r}$ and $\parallel\parallel'_n \leq C(n)\parallel \parallel_{n+r} $ for all $n>b$.

 A continuous map $F:\mathcal{U}\rightarrow \mathcal{G}$ is called tame if each $x\in \mathcal{U}$ has a neighborhood on which a tame estimate holds for $F$ with base $b$ and degree $r$, i.e., 
  \[\parallel F(y)\parallel _n\leq C (1+\parallel y\parallel_{n+r})\]
for all $n>b$. Here the numbers $b$ and $r$ (called the degree) may vary from neighborhood to neighborhood and $C$ depends on $n$ and the neighborhood under consideration. Such a map is called smooth tame if all its derivatives are tame.

Many interesting smooth tame maps arise from partial differential operators. For our purpose, we consider a class of tame spaces introduced in \cite{Hamilton}. Let $V$ be a vector bundle over $M$. For a section $f$ of $V$, we let $|f|_n$ denote the $L^2$ norm of $f$ and its covariant derivatives up to degree $n$. For a time-dependent section $f$ in $\mathcal{F}=C^\infty([0,T]\times M, V)$, we put
\[|f|_n^2=\int_0^T |f(t)|_n^2dt,\]
so that $|f|_n$ only measures space derivatives. Then let
\[\parallel f\parallel_n=\sum_{2j\leq n}| (\frac{\partial} {\partial t})^jf|_{n-2j}.\]
This is a weighted norm counting one time derivative equal to two space derivatives. These norms make $\mathcal{F}=C^\infty\left([0,T]\times M, V\right)$ a tame space (this fact is not proved in \cite{Hamilton}). 

By Sobolev embedding, a tamely equivalent grading $|[\quad]|_n$ is given by 
\[|[f]|_n=\sum_{2j\leq n}[ (\frac{\partial} {\partial t})^jf]_{n-2j} \]
where $[f]_n$ is the sup-norm of $f$ and its space derivatives upto degree $n$.

If $P:\mathcal{U}\subset C^{\infty}([0,T]\times M, V)\rightarrow C^\infty\left([0,T]\times M, W\right)$ is a smooth (nonlinear) partial differential operator between sections of vector bundles, it is clear that $P$ is a smooth tame map. On the other hand, the set $D^r(V,W)$ of differential operators of degree $\leq r$ is itself a vector bundle and a linear operator of degree $r$ may be viewed as a section of this vector bundle.   

The Fr\'echet spaces we will be considering are $\mathcal{F}=dC^{\infty}\left([0,T]\times M, \Lambda^2(M)\right)$ and $\mathcal{G}=\mathcal{F}\times d C^\infty \left(M,\Lambda^2(M)\right)$. 
\begin{proposition}
 The Fr\'echet space $\mathcal{F}$ is tame with the grading $\parallel\parallel_n$ restricted from $C^\infty\left([0,T]\times M,\Lambda^3(M)\right)$. The Fr\'echet space $\mathcal{G}$ is tame under the grading $\parallel\parallel_n+||_n$.
\end{proposition}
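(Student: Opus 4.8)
The plan is to reduce everything to two stability properties of the tame category, both established in \cite{Hamilton1}: the image of a tame linear projection is a tame direct summand (hence tame), and a finite product of tame spaces is tame. The content of the proposition then splits into (a) that the ambient spaces $C^\infty([0,T]\times M,\Lambda^3(M))$ and $C^\infty(M,\Lambda^3(M))$ are tame in the gradings described, and (b) that passing to exact forms is effected by a tame projection. Given (a), parts (b) and the product statement are formal.

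For (b) I would use Hodge theory with respect to a \emph{fixed} background metric on $M$ (the same metric used to define the norms $|\cdot|_n$). Let $\Delta_M$ be the associated Hodge Laplacian on $\Lambda^3(M)$, let $G$ be its Green's operator, and set $P=d\,\delta\,G$. Then $P$ is the orthogonal projection onto exact $3$-forms: $P\alpha=d(\delta G\alpha)$ is exact, and $P$ fixes every exact form by the Hodge decomposition. Since $G$ has order $-2$ and $d\delta$ has order $2$, $P$ is a pseudodifferential operator of order $0$, hence a smooth tame map on $C^\infty(M,\Lambda^3(M))$ with tame estimate of degree $0$. Letting $P$ act slicewise in $t$ on $C^\infty([0,T]\times M,\Lambda^3(M))$ changes nothing in the $t$-direction, so the parabolic estimate $\parallel Pf\parallel_n\le C\parallel f\parallel_n$ holds and $P$ is again tame of degree $0$. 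Its image is exactly $\mathcal{F}=dC^\infty([0,T]\times M,\Lambda^2(M))$ (respectively $dC^\infty(M,\Lambda^2(M))$), and since $P$ restricts to the identity there, the pair consisting of the inclusion and $P$ exhibits $\mathcal{F}$ as a tame direct summand of the ambient space. Thus $\mathcal{F}$ is tame, and $\mathcal{G}=\mathcal{F}\times dC^\infty(M,\Lambda^2(M))$ is tame as a product of two tame spaces.

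The genuine work is (a) for the time-dependent factor, which the excerpt itself flags as not contained in \cite{Hamilton}: the difficulty is the \emph{anisotropic} parabolic grading, in which one $t$-derivative counts as two $M$-derivatives. For the closed manifold $M$, expanding a section in eigensections of an elliptic operator such as $\Delta_M+1$ realizes $C^\infty(M,V)$ as a tame direct summand of $\Sigma(\mathbf{R})$, the standard route in \cite{Hamilton1}, and $C^\infty([0,T])$ is tame as well. What must be checked is that the parabolic norms $\parallel f\parallel_n=\sum_{2j\le n}|(\partial_t)^jf|_{n-2j}$ still yield a tame structure. I would do this by producing smoothing operators adapted to the parabolic scaling: take $S(\tau)$ to be spectral truncation on $M$ at spatial frequency $\le\tau$ composed with mollification in $t$ at the time scale $\tau^{-2}$, so that a time scale is matched to the square of a space scale. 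The two smoothing inequalities $\parallel S(\tau)f\parallel_n\le C\,\tau^{\,n-m}\parallel f\parallel_m$ and $\parallel (I-S(\tau))f\parallel_m\le C\,\tau^{-(n-m)}\parallel f\parallel_n$ for $m\le n$ then follow from the one-variable estimates once the scales are matched, and together with the interpolation (logarithmic convexity) inequalities for the $\parallel\cdot\parallel_n$ they give tameness of $C^\infty([0,T]\times M,V)$ in the parabolic grading by Hamilton's criterion.

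The main obstacle is precisely this verification of the parabolic smoothing inequalities: one must confirm that the $\tau$ versus $\tau^{-2}$ matching of spatial and temporal smoothing is the correct bookkeeping to absorb the weight $2j$ on the time derivatives, and one must control the mollifier uniformly near the endpoints $t=0,T$. The latter I would handle by composing with a fixed tame extension operator $C^\infty([0,T],V)\to C^\infty(\mathbf{R},V)$, after which the temporal mollification is a convolution on the line and the estimates become standard. With the base case (a) secured, the remaining assertions of the proposition follow from the stability of tameness under tame direct summands and finite products.
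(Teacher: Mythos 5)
Your treatment of the exact-form subspaces coincides with the paper's: it too extends the Green's operator of a fixed background metric slicewise in $t$, uses $\psi=dGd^*\psi$ (your $P=d\delta G$) together with the estimate $|G\alpha|_n\leq C|\alpha|_{n-2}$ to exhibit $\mathcal{F}$ as a tame direct summand of the ambient space, and concludes by Hamilton's Lemma 1.3.3 and stability of tameness under products. The only divergence is your part (a): the paper simply asserts that $C^\infty([0,T]\times M,V)$ with the parabolic grading is tame (explicitly flagging that this is not proved in the cited reference), whereas you sketch a proof via parabolically scaled smoothing operators --- a sensible supplement, though to meet Hamilton's actual definition you would still need to upgrade the smoothing-operator construction to a tame direct-summand embedding into a space of exponentially decreasing sequences.
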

\begin{proof}
 Fix a Riemannian metric on $M$. Let $G$ be the Green's operator of the Hodge Laplacian $\Delta$ on $M$. We extend $G$ to families of forms on $M\times [0,T]$ by constancy in $t$. Then from the standard Hodge theory (or from Theorem 3.3 and Lemma 3.3.4 in \cite{Hamilton1}, pp. 158), we have the following estimates
\[|G(\alpha)|_n\leq C |\alpha|_{n-2},\]
where $C$ depends on $n$, $M$ and the metric. 
It follows from this estimate that
\[\parallel G( \alpha)\parallel_n\leq C\parallel \alpha\parallel_{n-2}.\]  
Thus $G$ is a linear tame map.  

Now for any $\psi \in \mathcal{F}$, we have $\psi=dGd^*(\psi)$. It follows that $\mathcal{F}$ is a closed subspace of $C^{\infty}\left([0,T]\times M, \Lambda^2(M)\right)$. Since $dGd^*$ is a composite of tame maps, it is tame. Thus,  $\mathcal{F}$ is a direct summand of the tame space $C^{\infty}\left([0,T]\times M, \Lambda^2(M)\right)$. The result now follows from Lemma 1.3.3 in \cite{Hamilton1}, pp. 136. 

For the same reason, $d C^\infty \left(M,\Lambda^2(M)\right)$ is tame. The product $\mathcal{G}$ is thus tame.
\end{proof}

The map interesting to us is $F:\mathcal{F}\rightarrow \mathcal{F}\times \mathcal{G}$:

\begin{eqnarray}
 F(\theta)=\left(\frac{d}{dt}\theta-\Delta_\sigma\sigma-\mathcal{L}_{V(\sigma)}\sigma,\theta|_{t=0}\right)
\end{eqnarray}
with $\sigma=\sigma_0+\theta$.
We have computed the derivative of $F$ 
\[F_*(\psi)=\left(\frac{d}{dt}\psi-P(\psi)-Q_{-5,-1}(\psi), \psi|_{t=0}\right).\]
Note that the operators $P$ and $Q$ depends smoothly on $\theta$. In fact, the coefficients are given by universal functions of torsion and curvature of $\sigma=\sigma_0+\theta$.
\subsection{Injectivity of $F_*$}
This is relatively easy based on Lemma \ref{Parabolicity}.
\begin{lemma}\label{Injectivity}
The system 
 \[\frac{d}{dt}\psi-P(\psi)-Q_{-5,-1}(\psi)=0,\]
\[\psi(0)=0,\]
 \[d\psi=0\]
has a unique solution $\psi=0.$
\end{lemma}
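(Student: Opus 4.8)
The plan is to run a standard $L^2$ energy estimate, using Lemma \ref{Parabolicity} to recognize the system as a genuine dissipative heat equation with purely algebraic lower-order terms, and then to conclude by Gr\"onwall's inequality that the energy, which vanishes at $t=0$, must vanish for all time.

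Because the constraint $d\psi=0$ holds, Lemma \ref{Parabolicity} applies and lets me rewrite the evolution equation as
\[
\frac{d}{dt}\psi = P(\psi)+Q_{-5,-1}(\psi) = -\Delta_\sigma\psi + d\Phi(\psi),
\]
where $\Phi$ is a zeroth-order (algebraic) operator on $\psi$ whose coefficients are universal functions of the torsion of $\sigma=\sigma_0+\theta$. Since $M$ is compact and $\theta$ is a fixed smooth solution on $[0,T]\times M$, these coefficients, together with $\partial_t g^\sigma$ and $\partial_t(*_\sigma 1)$, are uniformly bounded on $[0,T]\times M$; I absorb all such bounds into a single constant $C$.

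Next I introduce the time-dependent energy
\[
E(t)=\int_M |\psi(t)|^2_{g^\sigma}\,*_\sigma 1,
\]
which satisfies $E(0)=0$ because $\psi(0)=0$. Differentiating in $t$ produces the main term $2\int_M g^\sigma(\partial_t\psi,\psi)\,*_\sigma 1$ together with two contributions coming from $\partial_t g^\sigma$ and $\partial_t(*_\sigma 1)$, both of which are bounded by $C\,E(t)$. For the main term I substitute the evolution equation and integrate by parts with respect to $g^\sigma$ at each fixed time. Using $d\psi=0$, the Laplacian term contributes
\[
-2\int_M g^\sigma(\Delta_\sigma\psi,\psi)\,*_\sigma 1 = -2\int_M g^\sigma(dd^*\psi,\psi)\,*_\sigma 1 = -2\,\|d^*\psi\|^2_{g^\sigma}\le 0,
\]
which supplies the crucial dissipative sign, while the lower-order term is handled by moving $d$ onto $\psi$ and applying Young's inequality,
\[
2\int_M g^\sigma(d\Phi(\psi),\psi)\,*_\sigma 1 = 2\int_M g^\sigma(\Phi(\psi),d^*\psi)\,*_\sigma 1 \le \|d^*\psi\|^2_{g^\sigma} + C^2\,E(t).
\]
Here it is essential that $\Phi$ is purely algebraic: after integration by parts the term is only first order in $\psi$ and is absorbed by the favorable $-2\,\|d^*\psi\|^2_{g^\sigma}$.

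Combining these estimates yields a differential inequality of the form $\frac{d}{dt}E(t)\le C'\,E(t)$ on $[0,T]$, with $C'$ depending only on $\sigma$. Since $E(0)=0$, Gr\"onwall's inequality forces $E(t)\equiv 0$, hence $\psi\equiv 0$. The only genuinely delicate point is the bookkeeping for the time-dependent metric---differentiating the pointwise inner product and the volume form and checking that every resulting term is either manifestly dissipative or controlled by $E$---but no hard analysis is required, since the reduction in Lemma \ref{Parabolicity} has already stripped the system down to heat flow plus zeroth-order torsion terms.
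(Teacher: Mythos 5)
Your proposal is correct and follows essentially the same route as the paper: both proofs use Lemma \ref{Parabolicity} to rewrite the system as $\frac{d}{dt}\psi=-\Delta_\sigma\psi+d\Phi(\psi)$ and then invoke uniqueness for linear parabolic equations, which the paper simply attributes to ``standard linear parabolic theory.'' Your only addition is to spell out that standard step as an explicit $L^2$ energy estimate with Gr\"onwall's inequality (correctly using $d\psi=0$ to reduce $\Delta_\sigma\psi$ to $dd^*\psi$ and accounting for the time-dependence of $g^\sigma$), which makes the argument self-contained but is not a different method.
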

\begin{proof}
We have proved that if 
\[d\psi=0,\]
then \[P(\psi)+Q_{-5,-1}(\psi)=-\Delta_\sigma\psi+{\rm l.o.t}.\]
Thus $\psi$ satisfy an evolution equation of the form
\[\frac{d}{dt}\psi=-\Delta_\sigma\psi+{\rm l.o.t.}\] 
 Thus the lemma follows from the standard linear parabolic theory.
\end{proof}
\subsection{Surjectivity of $F_*$}
We have to show that for any time-dependent 3-form $\chi$ and any 3-form $\psi_0$, there is a family of 3-forms $\psi $ such that 
\begin{eqnarray}\label{Flowpsi}
\left\{
\begin{array}{l}
\frac{d}{dt}\psi-P(\psi)-Q_{-5,-1}(\psi)=\chi,\\\\
\psi(0)=\psi_0.
\end{array}\right.
\end{eqnarray}
  
We can write $\chi=d\xi$ and $\psi_0=d\beta_0$ for a time-dependent 2-form $\xi$ and a 2-form $\beta_0$. Rather than solve $\psi$, we may try to solve the following equation for $\beta$:
\begin{eqnarray*}
 \frac{d}{dt}\beta=-\Delta_\sigma\beta+\Phi(d\beta)+\xi,\\
 \beta(0)=\beta_0.
\end{eqnarray*}

The existence of $\beta(t)$ follows from standard parabolic theory. Set $\psi=d\beta$. We get:
\begin{lemma}\label{Surjectivity}
 The map $F_*$ is surjective.
\end{lemma}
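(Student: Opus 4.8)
The plan is to complete the construction already begun: having written $\chi = d\xi$ and $\psi_0 = d\beta_0$ and obtained, from standard linear parabolic theory, a smooth family of $2$-forms $\beta(t)$ on $[0,T]$ solving
\begin{eqnarray*}
 \frac{d}{dt}\beta = -\Delta_\sigma\beta + \Phi(d\beta) + \xi,\\
 \beta(0) = \beta_0,
\end{eqnarray*}
I would set $\psi = d\beta$ and verify directly that $\psi$ solves the system (\ref{Flowpsi}). That the $\beta$-equation is solvable rests on its being of forward heat type: its leading term $-\Delta_\sigma\beta$ has the principal symbol of the negative Hodge Laplacian on $2$-forms, the coefficients depend smoothly on $t$ through the \emph{given} family $\sigma(t) = \sigma_0 + \theta(t)$, and $M$ is compact, so a global solution on $[0,T]$ exists and is smooth in space and time.

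To verify that $\psi = d\beta$ answers the surjectivity question, I would first note the two easy points: $\psi$ is exact by construction, hence lies in $\mathcal{F}$, and $\psi(0) = d\beta(0) = d\beta_0 = \psi_0$, so the initial condition in (\ref{Flowpsi}) holds. For the evolution equation I would apply $d$ to the displayed $\beta$-equation. Because $d$ is independent of $t$ it passes through $\frac{d}{dt}$, and because $\sigma(t)$ is frozen at each fixed time the exterior derivative commutes with the Hodge Laplacian $\Delta_{\sigma(t)}$ of the metric $g^{\sigma(t)}$; together with $d\xi = \chi$ this yields
\[
 \frac{d}{dt}\psi = -\Delta_\sigma\psi + d\Phi(\psi) + \chi.
\]
Since $\psi = d\beta$ is closed, Lemma \ref{Parabolicity} rewrites $-\Delta_\sigma\psi + d\Phi(\psi)$ as $P(\psi) + Q_{-5,-1}(\psi)$, whence $\frac{d}{dt}\psi - P(\psi) - Q_{-5,-1}(\psi) = \chi$, which is exactly (\ref{Flowpsi}). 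Thus $F_*(\psi) = (\chi,\psi_0)$ and $F_*$ is surjective.

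The substantive point---what makes passing to the potential legitimate rather than a mere change of unknown---is precisely Lemma \ref{Parabolicity}: it is what guarantees that on closed forms the non-parabolic operator $P + Q_{-5,-1}$ collapses to $-\Delta_\sigma + d\Phi$, so that the potential-level flow is honestly parabolic and its $d$-image reproduces the original equation. The only step demanding care is the commutation of $d$ with the time-dependent operator $\Delta_\sigma$; this is harmless because at each fixed $t$ it is the standard commutation $d\,\Delta_{\sigma(t)} = \Delta_{\sigma(t)}\,d$ for a fixed metric, and no cross terms arise since $d$ carries no $t$-dependence. I do not expect any genuine obstacle here: once Lemma \ref{Parabolicity} is in hand the argument is soft, reducing surjectivity to solvability of a single linear parabolic equation for $\beta$.
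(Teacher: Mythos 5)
Your proposal is correct and follows essentially the same route as the paper: write $\chi=d\xi$, $\psi_0=d\beta_0$, solve the genuinely parabolic equation for the potential $\beta$, set $\psi=d\beta$, and invoke Lemma \ref{Parabolicity} on the closed form $\psi$ to recover the original system (\ref{Flowpsi}). The extra care you take over commuting $d$ with the time-dependent $\Delta_{\sigma(t)}$ only makes explicit what the paper leaves as ``it is clear.''
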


\begin{proof}
It is clear that $\psi$ solves
\begin{eqnarray*}
\frac{d}{dt}\psi=-\Delta_\sigma\psi+d\Phi(\psi)+\chi\\
\psi(0)=\psi_0.
\end{eqnarray*}
Because $d\psi=0$, and $d\sigma=0$, by Lemma \ref{Parabolicity}, 
\[P(\psi)+Q_{-5,-1}\psi=-\Delta_\sigma\psi+d\Phi(\psi).\] Hence $\psi$ solves (\ref{Flowpsi}).
\end{proof}

Combining Lemma (\ref{Injectivity}) and Lemma (\ref{Surjectivity}) we conclude that $F_*|_\theta$ is an isomorphism. Thus we may define the family of inverses $F_*^{-1}:\mathcal{U}\times \mathcal{G}\rightarrow \mathcal{F}$. To apply Nash Moser inverse function theorem, we need to prove $F_*^{-1}$ is smooth tame.
\subsection{$F_*^{-1}$ is smooth tame}
Usually, this amounts to certain a priori estimates of the solutions of a family linear equations. In our case, since we have showed that a solution $\psi$ must satisfy a parabolic equation of the form
 \[\frac{d}{dt}\psi=-\Delta_\sigma\psi+d\Phi(\psi),\]
this should follow from standard linear parabolic theory. However, the reader may find it difficult to find relevant results in the literature. Fortunately, R. Hamilton essentially did this work in \cite{Hamilton}. His result is general enough for us to avoid annoying estimates. For completeness, we state this result. 

Let $X$ be a compact manifold and let $V$ and $W$ be vector bundles over $X$. Consider a system of linear evolution equations on $0\leq t\leq T$ for sections $f$ of $V$ and $g$ of $W$
 \[\frac{d}{dt}f=Pf+Lg+h,\quad \frac{d}{dt}g=Mf+Ng+k\]
where $P, L, M, N$ are linear differential operators involving only space derivatives whose coefficients are smooth functions of both space and time. We assume $P$ has degree 2, $L$ and $M$ has degree 1, and $N$ has degree 0. 

First, Hamilton showed that if the first equation is parabolic, then for any $(f_0,g_0,h,k)$, there exists a unique smooth solution $(f,g)$ with $f=f_0$ and $g=g_0$ at $t=0$. Then he proved the following 
\begin{theorem}[\cite{Hamilton}, pp. 266]
 Let the solution $(f,g)$ of the system be written as a function 
  \[(f,g)=S(P,L,M,N,h,k,f_0,g_0)\]
of the coefficients $P,L,,N$ and the data $h,k$ and the initial values $f_0,g_0$. In the open set where $P$ is parabolic, the solution $S$ is a smooth tame map in the gradings $\parallel\parallel_n$ on $f,g,h,k$ and $||_n$ on $f_0,g_0$ and $|[\quad]|_n$ on $P,L,M,N$.
\end{theorem}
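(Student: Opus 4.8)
The plan is to reduce the statement to a family of tame a priori estimates for the linear system and then to bootstrap those estimates to all parameter-derivatives. Since existence and uniqueness of $(f,g)$ have already been asserted, the map $S$ is well-defined on the open set where $P$ is parabolic; what remains is to show that $S$ and each of its derivatives in $(P,L,M,N,h,k,f_0,g_0)$ obeys a tame estimate in the stated gradings.

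First I would establish the basic energy estimate. Because $P$ has degree $2$ and is parabolic, it satisfies a G\aa rding inequality, so pairing $\frac{d}{dt}f$ with $f$, integrating by parts in space, and applying Gr\"onwall in time controls the weighted parabolic norm of $f$; the coupling term $Lg$ together with the zeroth-order equation for $g$ then controls $g$ as well. Differentiating the system in space and invoking parabolic maximal regularity promotes this to higher gradings, yielding
\[\parallel (f,g)\parallel_n \leq C\bigl(\parallel h\parallel_{n}+\parallel k\parallel_{n}+|f_0|_{n}+|g_0|_{n}\bigr),\]
where the precise derivative shift on the initial data is part of the bookkeeping below and $C$ depends only on a fixed low-order norm of the coefficients. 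The lower-order operators $L,M,N$ contribute only terms absorbable after interpolation, so they do not affect the leading structure.

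The decisive point is that the dependence on the coefficients is tame. Here I would invoke the tame product (Moser) estimate
\[\parallel uv\parallel_n\le C\bigl(\parallel u\parallel_n\,\parallel v\parallel_0+\parallel u\parallel_0\,\parallel v\parallel_n\bigr),\]
together with the Gagliardo--Nirenberg interpolation inequalities adapted to the parabolic gradings. Every appearance of a coefficient in the energy estimate enters through such a product, so the top-order norm of the solution is bounded linearly by $(1+\parallel h\parallel_{n+r}+\cdots)$ with a constant controlled by lower-order data and by $|[P,L,M,N]|$ at a fixed level; this is exactly a tame estimate of some finite degree $r$ and base $b$. This is the step I expect to be the main obstacle: tracking the derivative losses across the three gradings $\parallel\ \parallel_n$, $|\ |_n$, and $|[\ ]|_n$ so that each norm appears at the correct level and the constant genuinely depends only on lower norms of the coefficients, never on their top-order norm.

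Finally, to obtain smoothness I would differentiate $S$ in its arguments. A first variation $(\dot P,\dot L,\dot M,\dot N,\dot h,\dot k,\dot f_0,\dot g_0)$ produces $(\dot f,\dot g)=DS(\cdots)$ solving the same parabolic system, with unchanged leading operator $P$ but forcing of the form $\dot P f+\dot L g+\dot h$ (and its analogue for $g$) assembled from the already-controlled solution $(f,g)$. Applying the tame estimate of the previous paragraph to this companion system shows $DS$ is tame; iterating, each higher derivative $D^kS$ solves a parabolic system whose forcing is built from lower derivatives via tame products, so by induction every $D^kS$ is tame. Hence $S$ is a smooth tame map, as claimed.
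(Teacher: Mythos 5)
First, a point of orientation: the paper does not prove this theorem at all. It is quoted from Hamilton's Ricci flow paper (\cite{Hamilton}, p.~266) precisely so that the authors can, in their words, avoid ``annoying estimates''; the only argument supplied in this vicinity is the one-line corollary specializing to a single equation. So there is no in-paper proof to compare against, and your proposal has to be judged against Hamilton's original argument. Its architecture does broadly match his: energy estimates for the parabolic equation, interpolation and product estimates to obtain tameness in the coefficients, and differentiation of the solution map $S$ to get smooth tameness by induction, with each derivative $D^kS$ solving the same system with forcing assembled from lower derivatives.

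That said, as a proof the proposal has a genuine gap, and it sits exactly where you flag ``the main obstacle.'' Your displayed a priori estimate
\[\parallel (f,g)\parallel_n \leq C\bigl(\parallel h\parallel_{n}+\parallel k\parallel_{n}+|f_0|_{n}+|g_0|_{n}\bigr)\]
cannot be the right statement: for $S$ to be tame \emph{jointly} in all of its arguments, the right-hand side must also contain the top-order coefficient norms $|[P]|_{n+r}$, $|[L]|_{n+r}$, $|[M]|_{n+r}$, $|[N]|_{n+r}$ additively, each multiplied by a quantity depending only on low-order norms of the data and the solution. A constant $C$ that merely ``depends on a fixed low-order norm of the coefficients'' gives tameness in $(h,k,f_0,g_0)$ for frozen coefficients, which is strictly weaker than what the theorem asserts and not enough for the application in this paper: the derivative of $F_*^{-1}$ in the $\theta$-direction is precisely a derivative of $S$ in the $P$-direction, so the high norms of the coefficients are exactly where the work is. Producing that additive top-order dependence is the real content of Hamilton's proof --- it requires his parabolic interpolation inequalities and a careful accounting of the mismatch among the three gradings (two space derivatives per time derivative in $\parallel\ \parallel_n$ on $f,g,h,k$, sup-norms $|[\quad]|_n$ on coefficients, spatial $L^2$ norms $|\ |_n$ on initial data) --- and your sketch asserts it rather than establishes it. A smaller but related issue: the $g$-equation $\frac{d}{dt}g=Mf+Ng+k$ gains nothing from parabolicity, so controlling $\parallel g\parallel_n$ needs $n+1$ spatial derivatives of $f$ in space-time $L^2$, which is available only through the parabolic smoothing encoded in the weighted grading; ``absorbable after interpolation'' glosses over why the degree conventions on $L,M,N$ make the coupled estimate close.
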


A corollary is the the statement for a single parabolic system.
\begin{corollary}
 Let the solution $f$ to the system 
  \[\frac{d}{dt}f=Pf+h\] with the initial condition $f(0)=f_0$
be written as a function 
\[f=S(P,h,f_0).\]
In the open set where $P$ is parabolic, the solution $S$ is a smooth tame map in the gradings $\parallel\parallel_n$ on $f,h$ and $||_n$ on $f_0$ and $|[\quad]|_n$ on $P$.
\end{corollary}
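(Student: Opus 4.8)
The plan is to obtain the corollary as an immediate specialization of the preceding theorem, in which the coupled system is collapsed to a single parabolic equation. First I would take the auxiliary bundle $W$ to be trivial, or equivalently set the off-diagonal and zeroth-order coefficients $L$, $M$, $N$ all equal to zero and choose the auxiliary data $g_0=0$ and $k=0$. With these choices the second evolution equation reads $\frac{d}{dt}g=0$ with $g(0)=0$, whose unique solution is $g\equiv 0$, while the first equation becomes exactly $\frac{d}{dt}f=Pf+h$ with $f(0)=f_0$. The parabolicity hypothesis on $P$ is unchanged, so on the open set where $P$ is parabolic the system theorem produces the unique solution $(f,0)$, and its $f$-component is precisely the solution $S(P,h,f_0)$ we seek. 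This already supplies the existence and uniqueness implicit in the statement.

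Next I would exhibit $S$ as a composite of smooth tame maps. Let $\iota$ denote the map $(P,h,f_0)\mapsto (P,0,0,0,h,0,f_0,0)$ that pads the single-equation data with the vanishing system data, and let $\pi_f$ denote the projection $(f,g)\mapsto f$. Then $S(P,h,f_0)=\pi_f\circ S_{\mathrm{sys}}\circ \iota(P,h,f_0)$, where $S_{\mathrm{sys}}$ is the solution operator of the theorem. The map $\iota$ is linear and merely inserts zeros into the extra slots, so it satisfies a tame estimate of degree $0$ (indeed the relevant norms of $\iota(P,h,f_0)$ equal those of $(P,h,f_0)$ in the product gradings $\parallel\parallel_n$, $||_n$, $|[\quad]|_n$); likewise $\pi_f$ is a bounded linear projection and hence tame of degree $0$. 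Since the theorem asserts $S_{\mathrm{sys}}$ is smooth tame, and since a composition of smooth tame maps is again smooth tame, $S$ is smooth tame in exactly the stated gradings.

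The only point requiring genuine care is the bookkeeping of gradings under $\iota$ and $\pi_f$: one must check that the product grading on the system's input space restricts, along the zero-slice cut out by $\iota$, to precisely the gradings $\parallel\parallel_n$ on $h$, $||_n$ on $f_0$, and $|[\quad]|_n$ on $P$ named in the corollary, and that no derivatives are lost in passing to the slice. This is immediate here because the extra coordinates are set identically to zero rather than eliminated by a nontrivial projection, so $\iota$ is an isometric inclusion in each norm and the degree and base of the tame estimate for $S$ coincide with those for $S_{\mathrm{sys}}$. There is therefore no new analytic content in the corollary: the entire force of the statement is carried by the system theorem, and the corollary is a purely formal restriction of it. Accordingly I expect the main (and only) subtlety to be notational rather than analytic.
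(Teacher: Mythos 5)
Your proposal is correct and is essentially identical to the paper's own proof, which likewise specializes the system theorem by the smooth tame assignment $(P,h,f_0)\mapsto(P,0,0,0,h,0,f_0,0)$ and invokes closure of smooth tame maps under composition. Your additional remarks on the projection onto the $f$-component and on the bookkeeping of gradings only make explicit what the paper leaves implicit.
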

\begin{proof}
 The assignment \[(P,h,f_0)\mapsto (P,L=0,M=0,N=0,h,k=0,f_0,g_0=0)\] is clearly a smooth tame map. The composite of two smooth tame maps is smooth tame.
\end{proof}

Now we use this to prove $F^{-1}_*$ is smooth tame.
\begin{lemma}\label{tame}
 The map $F^{-1}_*:\mathcal{U}\times \mathcal{G}\rightarrow \mathcal{F}$ is smooth tame.
\end{lemma}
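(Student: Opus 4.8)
The plan is to reduce the smooth tameness of $F_*^{-1}$ to the cited theorem of Hamilton on the solution operator $S$ of a linear parabolic system. The essential point established earlier is that, when $d\psi = 0$ and $d\sigma = 0$, Lemma \ref{Parabolicity} rewrites the linearized flow as $\frac{d}{dt}\psi = -\Delta_\sigma \psi + d\Phi(\psi)$, a genuinely parabolic evolution. So I would first make precise the bijection between solving the original linearized problem $F_*(\psi) = (\chi, \psi_0)$ in the exact-forms space $\mathcal{F}$ and solving a parabolic problem at the level of potentials. Writing $\chi = d\xi$ and $\psi_0 = d\beta_0$ as in Lemma \ref{Surjectivity}, the relevant parabolic system is
\[
 \frac{d}{dt}\beta = -\Delta_\sigma \beta + \Phi(d\beta) + \xi, \qquad \beta(0) = \beta_0,
\]
which is of the form $\frac{d}{dt}\beta = P\beta + h$ covered by the Corollary, with $P = -\Delta_\sigma + \Phi\circ d$ a second-order operator that is parabolic precisely on the open set $\mathcal{U}$ where $\sigma = \sigma_0 + \theta$ is definite.

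Next I would assemble $F_*^{-1}$ as a composite of three maps, each of which I claim is smooth tame, and invoke the fact (used repeatedly in the excerpt) that composites of smooth tame maps are smooth tame. The first map sends the data $(\theta, \chi, \psi_0) \in \mathcal{U} \times \mathcal{G}$ to the tuple $(P, h, \beta_0)$ feeding Hamilton's solution operator: here $P = P(\theta)$ depends on $\theta$ through the torsion and curvature of $\sigma_0 + \theta$ by universal smooth expressions, so $\theta \mapsto P$ is a nonlinear differential operator and hence smooth tame in the grading $|[\ \ ]|_n$ on coefficients, while $\chi \mapsto \xi$ and $\psi_0 \mapsto \beta_0$ are realized by the tame operator $Gd^*$ exactly as in the proof that $\mathcal{F}$ is tame. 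The middle map is Hamilton's $S:(P,h,\beta_0)\mapsto \beta$, which the Corollary asserts is smooth tame in the stated gradings. The final map is $\beta \mapsto d\beta = \psi$, a first-order differential operator and therefore smooth tame. Composing yields that $(\theta,\chi,\psi_0)\mapsto \psi = F_*^{-1}(\theta)(\chi,\psi_0)$ is smooth tame.

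The one point requiring genuine care, rather than formal bookkeeping, is the compatibility between the two descriptions of the solution: Hamilton's operator produces a potential $\beta$, but $F_*^{-1}$ must produce the exact form $\psi = d\beta$, and I must confirm that $\psi$ so obtained is the \emph{unique} solution in $\mathcal{F}$ of the original linearized equation, independently of the choice of potentials $\xi, \beta_0$. Uniqueness in $\mathcal{F}$ is exactly Lemma \ref{Injectivity}, so $\psi$ is well defined as an element of $\mathcal{F}$ even though $\beta$ is not; the chain of tame estimates nonetheless controls $\psi = d\beta$ in the grading on $\mathcal{F}$ because $d$ is tame. I would also note that the gradings must be matched correctly: Hamilton's theorem uses $|[\ \ ]|_n$ on the coefficient-operator $P$, and since $\theta\mapsto P(\theta)$ is a smooth nonlinear differential operator its tame estimate in $|[\ \ ]|_n$ follows from the tame-equivalence of $|[\ \ ]|_n$ with $\parallel\parallel_n$ recorded earlier in the section.

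The main obstacle, then, is not any hard analytic estimate—these are entirely delegated to Hamilton's theorem—but rather verifying that the problem genuinely falls under its hypotheses: namely that $P = -\Delta_\sigma + \Phi\circ d$ is parabolic on $\mathcal{U}$, which is guaranteed by Lemma \ref{Parabolicity} together with definiteness of $\sigma$, and that the passage through potentials is legitimate and tame-compatible. Once these are checked, smooth tameness of $F_*^{-1}$ follows, and Lemma \ref{Lemma} then follows from the Nash--Moser theorem applied to $F$.
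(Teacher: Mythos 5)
Your proposal is correct and follows essentially the same route as the paper: both reduce the claim to Hamilton's tame solution operator $S$ for linear parabolic systems via Lemma \ref{Parabolicity}, and then factor $F_*^{-1}$ as a composite of smooth tame maps, with the tameness of $\theta\mapsto P(\theta)$ coming from its being a nonlinear differential operator in $\sigma_0+\theta$. The only difference is that the paper applies the corollary directly to the $3$-form equation $\frac{d}{dt}\psi=-\Delta_{\sigma_0+\theta}\psi+d\Phi(\psi)+\chi$, whose unique solution is already identified with $F_*^{-1}(\theta,\chi,\psi_0)$, whereas you pass to the $2$-form potential via $Gd^*$ and then apply $d$; your extra well-definedness check via Lemma \ref{Injectivity} correctly closes the gap that this detour introduces.
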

  
\begin{proof}
 We have shown that $\psi=F^{-1}_*(\theta,\chi, \psi_0)$ is the unique solution to the parabolic system 
  \[\frac{d}{dt}\psi=-\Delta_{\sigma_0+\theta}\psi+d\Phi(\psi)+\chi\]
with $\psi(0)=\psi_0$.
Thus $\psi$ may be written as 
 \[\psi=S(P(\theta),\chi,\psi_0)\]
where $P(\theta)$ is the assignment to $\theta$ the linear operator $-\Delta_{\sigma_0+\theta}+d\Phi(\psi)$ in above equation (do not confuse $P$ with the linearization of $\Delta_\sigma\sigma$). Now $P$ itself can be viewed as a composite of the inclusion $i:\mathcal{U}\rightarrow C^\infty(M, \Lambda^3_+)$ with the nonlinear partial differential operator $C^\infty(M,\Lambda^3_+)\rightarrow D^2(\Lambda^3,\Lambda^3)$ by assigning $\sigma$ to $-\Delta_\sigma+{\rm l.o.t}$. Thus $P$ is smooth tame (keeping in mind the tame equivalence of the gradings $\parallel\parallel_n$ and $|[\quad]|_n$). 

Hence the map $F^{-1}_*:\mathcal{U}\times \mathcal{G}$ may be factored as 
\[F^{-1}_*=S\circ (P\times i) \]
where $i$ is the inclusion of $\mathcal{G}$ into $C^\infty([0,T]\times M, \Lambda^3(M))\times C^\infty (M,\Lambda^3(M))$. Since all maps involved are smooth tame, so is $F^{-1}_*$. 
\end{proof}

Finally, Lemma \ref{Lemma} follows from Nash Moser inverse function theorem.


\begin{thebibliography}{1}
\bibitem{BryantHolonomy} Robert Bryant, Metrics with exceptional holonomy, Ann. of Math. (2) 126 (1987), 525-576. 
\bibitem{BryantG2}Robert Bryant, Some remarks on $G_2$-structures, arXiv:math/0305124v4.
\bibitem{Cao} H.-D. Cao, Deformation of K\"ahler metrics to K\"ahler-Einstein metrics on compact K\"ahler manifolds, Invent. Math. 81 (1985), 359-372. 
\bibitem{De}Dennis Deturck, Deforming metrics in the direction of their Ricci tensors, J. Differential Geom. 18 (1983) 157-162.
\bibitem{Hamilton}R. Hamilton, Three-manifolds with positive Ricci curvature, J. Diff. Geom 17 (1982), 255-306.
\bibitem{Hamilton1} R. Hamilton, The inverse function theorem of Nash and Moser, Bull. Amer. Math. Soc. 7 (1) (1982), 65-222.  
\bibitem{Hitchin} Nigel Hitchin, The geometry of 3-forms in 6 and 7 dimensions, J. Differential Geom. 55 (2000), 547-576.
\bibitem{Joyce} Dominic Joyce, Compact Manifolds with Special Holonomy, Oxford Mathematical Monographs, Oxford University Press, Oxford, 2000.
\bibitem{Ka}Spiro Karigiannis, Geometric flows on manifolds with $G_2$-structure I, arXiv:math/0702077v2.
\bibitem{KY}Spiro Karigiannis and Shing-Tung Yau, Geometric flows on manifolds with $G_2$-structure II, in preparation.
\bibitem{LW}H-V Le, G. Wang, Anti-complexified Ricci flow on compact symplectic manifolds, J. reine angew. Math. 530 (2001), 17-31.
\bibitem{Schouten} J. A. Schouten, Kalssifizierung der Alternierenden Gr\"oszen Dritten Grades in 7 Dimensionen, Rend. Circ. Maten. Palermo 55 (1931), 131-156. 
\end{thebibliography}
\end{document}